\documentclass[12pt,reqno]{amsart}
\usepackage[utf8]{inputenc}
\usepackage[T1]{fontenc}
\usepackage{amsmath,amssymb,amsthm,amsfonts,mathrsfs}
\usepackage{hyperref}
\usepackage{geometry}
\usepackage{tikz}
\usepackage{pgfplots}
\usepackage{caption}
\usepackage{mathtools}
\pgfplotsset{compat=1.17}
\usetikzlibrary{calc}
\usepackage{pstricks}
\usepackage{pst-plot}
\usepackage{pst-node}
\usepackage{bm}
\usepackage{enumitem}
\usepackage{array}
\usepackage{makecell}
\usepackage{color}
\usepackage[dvipsnames]{xcolor}

\newcommand{\cemph}[1]{{\color{blue!70!black}\emph{#1}}}

\allowdisplaybreaks[4]

\numberwithin{equation}{section}
\newtheorem{theorem}{Theorem}[section]
\newtheorem{lemma}[theorem]{Lemma}

\newtheorem{conjecture}[theorem]{Conjecture}

\newtheorem{example}[]{Example}

\newtheorem{remark}{Remark}[section]

\renewcommand{\S}{\mathfrak{S}}

\def\ides{\mathrm{ides}}
\def\pix{\mathrm{pix}}

\def\Asct{\mathrm{Asct}}

\def\Des{\mathrm{Des}}

\def\des{\mathrm{des}}

\def\fix{\mathrm{fix}}

\usepackage{color}
\usepackage[dvipsnames]{xcolor}

\usepackage[backend=biber,backref=true,style=alphabetic,sorting=nyt,giveninits=true,doi=false,url=false,isbn=false,maxbibnames=99,
  minbibnames=99]{biblatex}
\renewbibmacro*{in:}{}

\title[Fixed-Pixed points equidistribution]{A Refinement of the Fixed--Pixed Points Equidistribution on restricted Permutations}
\author{Yao Dong and Chao Xu}
\address{College of Mathematics and Statistics, Northwest Normal University, Lanzhou
730070, P.R. China}
\email{dongy@nwnu.edu.cn}
\address{Universite Lyon 1,
UMR 5208 du CNRS, Institut Camille Jordan\\
F-69622, Villeurbanne Cedex, France}
\email{xu@math.univ-lyon1.fr}

\date{\today}
\subjclass[2020]{05A05, 05A15, 05A19}
\begin{document}
\begin{abstract}
Motivated by a recent conjecture of Bsila, Cox, Hugo, Styron and Zhuang concerning fixed points and pixed points on pattern-avoiding permutations, we prove a bivariate refinement involving descent statistics.  Given a set of permutations $\Pi$, let $\S_n(\Pi)$ denote the set of permutations in the symmetric group $\S_n$ that avoid every element of $\Pi$ in the sense of pattern avoidance. For each set $\Pi$ appearing in their conjecture, we show that the pairs of statistics $(\des,\fix)$ and $(\ides,\pix)$
are equidistributed over $\S_n(\Pi)$. Our proof is based on explicit ordinary generating functions for the corresponding pattern-avoiding classes.  

\end{abstract}

\keywords{pixed points, fixed points, pattern avoidance, descents, inverse descents}
\maketitle

\section{Introduction}
Let $\S_n$ denote the symmetric group of permutations of the set $[n]=\{1,2,\ldots,n\}$. 
We call $i\in[n]$ a \cemph{fixed point} of $\sigma\in\S_n$ if $\sigma(i)=i$. Let $\fix(\sigma)$ denote the number of fixed points of $\sigma$. A permutation $\sigma\in\mathfrak S_n$ is called a \cemph{derangement} if $\fix(\sigma)=0$.  Researches on fixed points and derangements can refer to References \cite{BCCHZ05,DJW93,DZ01,HX09,L16,S12}.
We call $i$ an \cemph{ascent} of $\sigma$ if $\sigma(i)<\sigma(i+1)$ for $i\in[n-1]$ or $i=n$.
In the 1980s, D\'esarm\'enien introduced another family of permutations, now called \cemph{desarrangements}, and proved that they are equinumerous with derangements \cite{Des84}. A desarrangement is  a permutation whose first ascent is even. Note that a decreasing permutation $nn-1\cdots1$ is a desarrangement if and only if $n$ is even. The empty word is also regarded as a desarrangement.
\begin{example} 
For \(1\le n\le 4\), the desarrangements in \(\mathfrak S_n\) are, respectively,
\[
\varnothing,\quad \{21\},\quad \{213,312\},
\]
and
\[
\{2134,2143,3124,3142,3241,4123,4132,4231,4321\}.
\]
\end{example}

The intrinsic relationship between derangements and desarrangements has been further explored on the basis of the definition of pixed points. As a  combinatorial statistic on permutations, pixed points were originally proposed and thoroughly studied by Foata and Han in the setting of hyperoctahedral groups. 
Foata and Han~\cite{FH08} introduced the pixed factorization of a permutation, that is, every permutation $\sigma\in\S_n$ admits a unique factorization $\sigma=\iota\delta$, where $\iota$ is an increasing prefix and $\delta$ is a desarrangement. The letters of $\iota$ are the \cemph{pixed points} of $\sigma$, and we denote by $\pix(\sigma)$ the number of pixed points of $\sigma$.  In particular, the identity permutation  has $n$ pixed points. Under this terminology, derangements are the permutations with no fixed points, while desarrangements are the permutations with no pixed points. 
\begin{example}
Let $\sigma=135764829\in\mathfrak S_9$. Then the pixed factorization of $\sigma$ is
\[
\sigma=\underbrace{1357}_{\iota}\underbrace{64829}_{\delta}.
\]
Here $\iota=1357$ is the longest increasing prefix of $\sigma$ and the first ascent of $\delta=64829$ occurs at position $2$. Thus $\delta$ is a desarrangement and $\pix(\sigma)=4.$
\end{example}

Pattern avoidance is a central topic in the modern study of permutations.  Given two permutations
$\sigma\in\mathfrak S_n$ and $\tau\in\mathfrak S_k$, we say that $\sigma$ contains the pattern
$\tau$ if there exist indices
$1\leq i_1<i_2<\cdots<i_k\leq n$
such that the subsequence
$\sigma(i_1)\sigma(i_2)\cdots\sigma(i_k)$
has the same relative order as $\tau$.  Otherwise, we say that $\sigma$ \cemph{avoids} $\tau$. For example, the permutation $\sigma=31524$ contains the pattern $132$, since the subsequence
$354$ has the same relative order as $132$. On the other hand, $\sigma=31524$ avoids pattern $321$, since it has no decreasing subsequence of length $3$. More generally, for a set of patterns $\Pi$, we write
\begin{equation}
  \mathfrak S_n(\Pi)
=
\{\sigma\in\mathfrak S_n:\sigma\text{ avoids every pattern in }\Pi\}.  \end{equation}
When the patterns in \(\Pi\) are listed explicitly, we often omit the enclosing braces. For example,
$\mathfrak S_n(123,132)$ stands for
$\mathfrak S_n(\{123,132\})$.

The study of pattern-avoiding permutations goes back to the work of Knuth on stack-sortable permutations, where the permutations sortable by a single stack were shown to be precisely the $231$-avoiding permutations \cite{Knu98}.  These permutations are counted by the Catalan numbers, and this observation initiated a rich interaction between permutation patterns and classical Catalan objects.  Later, Simion and Schmidt \cite{SS85} carried out a systematic study of permutations avoiding patterns of length three, proving in particular that all single patterns of length three are Catalan-enumerated. Over the past few decades, pattern avoidance has become a broad framework for studying refined enumeration, bijective constructions, permutation statistics, and structural decompositions of permutation classes; see \cite{Kit11,MR02} for an overview.

Recently, Bsila, Cox, Hugo, Styron and Zhuang revisited desarrangements from the point of view of permutation statistics and pattern avoidance \cite{BCCHZ05}. They obtained several generating functions for desarrangements with respect to some permutation statistics, including descents, peaks, valleys, double ascents and double descents, and they also gave a complete enumeration of desarrangements avoiding prescribed sets of patterns of length three. Their work led to new interpretations of several classical integer sequences, including Catalan, Fine, Jacobsthal and Fibonacci numbers, in terms of pattern-avoiding desarrangements.

Let 
\begin{equation}\label{family:P}
    \mathcal P=
\left\{
\begin{array}{ccc}
\{132,312\}, & \{132,321\}, & \{213,231\},\\[2mm]
\{123,132,312\}, & \{123,213,231\}, & \{123,312,321\},\\[2mm]
\{132,312,321\}, & \{213,231,312\}, & \{213,231,321\}
\end{array}
\right\}.
\end{equation}
At the end of their paper, Bsila et al.~\cite{BCCHZ05} posed the following conjecture~\footnote{After completing our work, we learned that Zhuang and his students had independently obtained a bijective proof of the original conjecture~\cite{Zhuangnote}.}.
\begin{conjecture}
For all $n\geq 0$ and for every $\Pi\in\mathcal{P}$, the permutation statistics $\operatorname{fix}$ and $\operatorname{pix}$ are equidistributed over $\mathfrak S_n(\Pi)$. Equivalently,
\begin{equation}
    \sum_{\pi\in\mathfrak S_n(\Pi)}
x^{\operatorname{fix}(\sigma)}
=
\sum_{\pi\in\mathfrak S_n(\Pi)}
x^{\operatorname{pix}(\sigma)}.
\end{equation}
\end{conjecture}

The purpose of this paper is to prove a refinement of this conjecture. Instead of considering only the one-variable distributions of $\operatorname{fix}$ and $\operatorname{pix}$, we include descent statistics. We call $i\in[n-1]$ a \cemph{descent} of $\sigma\in\S_n$ if $\sigma(i)>\sigma(i+1)$. We denote by  $\Des(\sigma)$ the set of descents of $\sigma$, and denote its cardinality  by $\des(\sigma)=|\Des(\sigma)|$. We call \(i\in[n-1]\) an \cemph{inverse descent} of \(\sigma\in\mathfrak S_n\)
if \(\sigma^{-1}(i)>\sigma^{-1}(i+1)\), and we denote by
\(\operatorname{ides}(\sigma)\) the number of inverse descents of \(\sigma\).
Equivalently, \(i\) is an inverse descent of \(\sigma\) if \(i+1\) appears
to the left of \(i\) in \(\sigma\).
 Note that $\ides(\sigma)$ is simply the number of $i\in[n-1]$  such that  $i+1$ appears somewhere to the left of $i$ in $\sigma$.

\begin{example}Let $\sigma=314265\in\S_6$.
   The descents of $\sigma$  are $1,3$ and $5$, since
\(3>1\), \(4>2\), and \(6>5\). Hence \(\des(\sigma)=3\). Moreover,
\(\sigma^{-1}=241365\), whose descents are \(2\) and \(5\). Equivalently,
the inverse descents of \(\sigma\) are obtained by comparing the positions
of \(i\) and \(i+1\): here \(3\) appears to the left of \(2\), and \(6\)
appears to the left of \(5\). Therefore, the inverse descents of \(\sigma\)
are \(2\) and \(5\), and
$\ides(\sigma)=2.$
\end{example}

For each pattern set $\Pi\in\mathcal{P}$, we define the following two enumerative polynomials
\begin{equation}\label{gen-F}
    F_\Pi(t;x,y)
:=
\sum_{n\geq 0}
\left(
\sum_{\pi\in\mathfrak S_n(\Pi)}
x^{\operatorname{des}(\pi)}y^{\operatorname{fix}(\pi)}
\right)t^n
\end{equation}
and
\begin{equation}\label{gen-P}
    P_\Pi(t;x,y)
:=
\sum_{n\geq 0}
\left(
\sum_{\pi\in\mathfrak S_n(\Pi)}
x^{\operatorname{ides}(\pi)}y^{\operatorname{pix}(\pi)}
\right)t^n.
\end{equation}
The following theorem is our main result.
\begin{theorem}\label{thm:main}
For every $\Pi\in\mathcal P$, we have
\begin{equation}
   F_{\Pi}(t;x,y)=P_{\Pi}(t;x,y). 
\end{equation}
Equivalently, for $n\ge 0$, the two pairs  $(\des,\fix)$ and $(\ides,\pix)$ are equidistributed over $\mathfrak S_n(\Pi)$.
\end{theorem}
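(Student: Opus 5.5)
We will prove Theorem~\ref{thm:main} class by class. For each of the nine sets $\Pi\in\mathcal P$ we give an explicit structural decomposition of $\S_n(\Pi)$, compute $F_\Pi(t;x,y)$ and $P_\Pi(t;x,y)$ in closed form as rational functions of $t$ (in some cases quadratic-irrational), and check that the two expressions coincide. Each class avoids two or three patterns of length three, so the structure is rigid. Roughly, each permutation is a direct or skew sum of small blocks: increasing runs, decreasing runs, layered pieces, or single letters inserted at the front or end. This is what makes closed forms feasible.

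The key steps, in order, are as follows.

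\emph{Step 1: block decomposition.} For each $\Pi$, describe the permutations via the position of $n$, the position of $1$, or the value $\sigma(1)$, whichever splits the class into a sum of smaller members of the same class. Some examples:
\begin{itemize}
\item $\S_n(132,312)$ consists of the permutations built by successively placing the current maximum or minimum at the end. Reading right to left, each letter is the max or min of the remaining letters.
\item $\S_n(213,231)$ is the reverse of this picture: reading left to right, each letter is the current min or max of what remains.
\item $\S_n(132,321)$ is known to consist of permutations of a skew/direct-sum shape with at most one descent block.
\end{itemize}
The three-pattern classes are subclasses of these with Fibonacci-like or polynomial counts, so their structures are obtained by restriction.

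\emph{Step 2: tracking $(\des,\fix)$.} Within each decomposition, record how $\des$ and $\fix$ behave under the decomposition. Descents are local, so they add up across blocks with a boundary correction. Fixed points are sensitive to value shifts. In a direct sum $\alpha\oplus\beta$ the fixed points of both parts are preserved. In skew sums, or when $n$ or $1$ is placed in the middle, fixed points are typically destroyed except possibly at a single central letter. This leads to a functional equation for $F_\Pi$, possibly after introducing an auxiliary series for the derangement-type pieces (those with no fixed points).

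\emph{Step 3: tracking $(\ides,\pix)$.} Do the same for $(\ides,\pix)$. Here $\pix$ is determined by the longest increasing prefix $\iota$, together with the parity of the first ascent of the remaining suffix $\delta$. So we decompose $\sigma=\iota\delta$ directly: we enumerate the possible increasing prefixes compatible with $\Pi$, and the desarrangement suffixes with a given starting structure, weighting by $x^{\ides}$. Inverse descents count adjacent values appearing out of order. In these block structures they are again additive with boundary corrections, since each block is an interval of values.

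\emph{Step 4: comparison.} Solve the resulting equations and compare $F_\Pi$ with $P_\Pi$. Where possible, we will use symmetries to reduce the nine cases: reverse-complement preserves $\des$ and maps fixed points $i\mapsto n+1-i$, and inverse swaps $\des$ with $\ides$. The classes in $\mathcal P$ are not closed under these operations, however, so we expect most cases to require separate computation.

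The main obstacle is the $\pix$ side. The parity condition on the first ascent of the suffix is nonlocal relative to the block decomposition. We plan to handle it with a two-state refinement, splitting the series according to whether the initial decreasing run of the suffix has even or odd length, and summing geometric series in $x^2t^2$-type variables. We expect the classes whose count is Catalan- or Fine-like (the two-pattern classes) to be the hardest, because there the generating function is algebraic and the identity $F_\Pi=P_\Pi$ must be checked as an identity of algebraic functions rather than rational ones.
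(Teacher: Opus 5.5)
Your outline follows the paper's strategy: treat the nine classes separately, get explicit $F_\Pi$ and $P_\Pi$ from a structural description, and compare closed forms. Your plan for the $\pix$ side also matches the paper. You split according to the parity of the decreasing run after the maximal increasing prefix; the paper does this via $\eta(B)$ for $\{132,312\}$, and for $\{213,231\}$ by splitting $\tau$ into desarrangements and non-desarrangements.

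The gap is on the $\fix$ side, which you treat as routine, and the mechanism you propose there does not work. For $\S_n(132,312)$ your recursion is $\sigma=\sigma'\oplus 1$ or $\sigma=\sigma'\ominus 1$. In the second case the fixed points of $\sigma$ are the $i$ with $\sigma'(i)=i-1$. Iterating produces conditions $\sigma''(i)=i-2$, and so on, so a functional equation in $F_\Pi$ plus one auxiliary ``derangement'' series does not close without a catalytic variable for the offset. The same happens for $\sigma=12\cdots a\,n\,\tau^{+a}$ in $\S_n(213,231)$, where a fixed point in the $\tau$-part means $\tau(j)=j+1$. Your heuristic that at most one fixed point survives a skew step is right here. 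But whether that point exists depends on the whole shape, not on the fixed points of the pieces.

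The missing idea is the paper's global encoding. Write $B=[n]\setminus\Asct(\sigma)=\{b_1<\cdots<b_N\}$.
\begin{itemize}
\item The positions in $B$ carry $N,N-1,\dots,1$ from left to right.
\item The only fixed ascent positions are those after $\max B$.
\item Hence $\fix(\sigma)=n-\max B+\epsilon(B)$, where $\epsilon(B)=1$ exactly when $b_j+j=N+1$ for some (necessarily unique) $j$.
\item The sets with $\epsilon(B)=1$ are counted by a direct binomial sum, giving $t+x^2t^3/(1-t-x(1+x)t^2)$.
\end{itemize}
Combined with the lemma $\des=\ides=|B|-1$ on this class, both $F_{T_1}$ and $P_{T_1}$ become sums over the same sets $B$ with the same $x$-weight. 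For $\{213,231\}$ the paper never computes $\fix$ recursively at all.

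That is also where your remark about symmetries goes wrong. Reverse-complement preserves $(\des,\fix)$ and pairs up six classes within $\mathcal P$:
\begin{itemize}
\item $\{132,312\}$ with $\{213,231\}$;
\item $\{123,132,312\}$ with $\{123,213,231\}$;
\item $\{132,312,321\}$ with $\{213,231,321\}$.
\end{itemize}
So three of the nine $F_\Pi$ come for free. Only the corresponding $P_\Pi$ need separate work, since $\pix$ is not rc-invariant; for example $\pix(213)=0$ but $\pix(132)=1$.

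Your expectation of algebraic generating functions is also mistaken. No class in $\mathcal P$ is Catalan- or Fine-like. The two-pattern classes have $2^{n-1}$, $2^{n-1}$ and $\binom n2+1$ elements. The three-pattern classes have $n$ elements, except $\{123,312,321\}$, which is empty for $n\ge 5$. So every $F_\Pi$ and $P_\Pi$ is rational, and the real difficulty is the fixed-point count above, not the $\pix$ side.
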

\begin{remark}
For a pattern set $\Pi\in\mathcal{P}$, let
\begin{equation}
    d_n(\Pi):=
|\{\sigma\in\mathfrak S_n(\Pi):\operatorname{fix}(\sigma)=0\}|
\text{ and }
\widetilde d_n(\Pi):=
|\{\sigma\in\mathfrak S_n(\Pi):\operatorname{pix}(\sigma)=0\}|.
\end{equation}
Thus \(d_n(\Pi)\) (resp. \(\widetilde d_n(\Pi)\)) is the number of  derangements (resp. desarrangements) in \(\mathfrak S_n(\Pi)\). 
It follows from setting \(x=1\) and $y=0$ in Theorem~\ref{thm:main} that
\begin{equation}
    d_n(\Pi)=\widetilde d_n(\Pi),
\end{equation}
for all \(n\geq 0\) and all \(\Pi\in\mathcal P\). 
The numbers \(d_n(\Pi)\) can be obtained from the study of enumerating the fixed point refinement of  pattern-avoiding permutations by
Robertson--Saracino--Zeilberger~\cite{RSZ02} and
Mansour--Robertson~\cite{MR02}.
Theorem~\ref{thm:main} refines the nine pattern classes belonging to $\mathcal{P}$ in Theorem~3.28 of Bsila et al.~\cite{BCCHZ05}. Note that the additional case
\(\Pi=\{132\}\) appearing in their theorem is not covered by this
refinement, since it does not satisfy the full equidistribution of $\fix$ and $\pix$.
\end{remark}

We shall prove Theorem~\ref{thm:main} by generating function techniques.
The two-pattern and three-pattern cases are treated in Sections~\ref{sec:2} and~\ref{sec:3}, respectively.

\section{Proof of Theorem~\ref{thm:main}}
Before proving Theorem~\ref{thm:main}, we recall two basic operators on permutations. For any $\sigma\in\S_n$, its \cemph{reversal} $\sigma^r\in\S_n$ is given by $\sigma^r(i)=\sigma(n+1-i)$; its \cemph{complement} $\sigma^c\in\S_n$ is given by  $\sigma^c(i)=n+1-\sigma(i)$. The \cemph{reversal-complement} of $\sigma$  is defined by 
\begin{equation}
    \sigma^{rc}:=(\sigma^r)^c=(\sigma^c)^r.
\end{equation}
Equivalently,
\begin{equation}\label{com-re-operator}
    \sigma^{rc}(i)=n+1-\sigma(n+1-i)\,\text{ for } 1\le i\le n.
\end{equation}
In what follows, we shall use the following elementary fact repeatedly.
\begin{lemma}\label{re-com}
  The reverse-complement map $\sigma\mapsto\sigma^{rc}$ preserves the statistics $\des$ and $\fix$. For any $\sigma\in \S_n$, we have 
 \begin{equation}
    \des(\sigma^{rc})=\des(\sigma)\text{ and } \fix(\sigma^{rc})=\fix(\sigma).
\end{equation}
\end{lemma}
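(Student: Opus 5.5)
The plan is to verify both identities directly from the explicit formula \eqref{com-re-operator}, $\sigma^{rc}(i)=n+1-\sigma(n+1-i)$. Each statistic is a count over a set of indices. In each case I will exhibit a bijection of $[n]$ (or of $[n-1]$) that carries the relevant index set of $\sigma$ onto the corresponding index set of $\sigma^{rc}$.

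\emph{Fixed points.} Consider the involution $i\mapsto n+1-i$ of $[n]$. If $\sigma(j)=j$, then putting $i=n+1-j$ gives
\[
\sigma^{rc}(i)=n+1-\sigma(j)=n+1-j=i.
\]
Conversely, $\sigma^{rc}(i)=i$ gives $\sigma(n+1-i)=n+1-i$. Hence this involution maps the fixed-point set of $\sigma$ bijectively onto that of $\sigma^{rc}$, and $\fix(\sigma^{rc})=\fix(\sigma)$.

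\emph{Descents.} Consider the involution $i\mapsto n-i$ of $[n-1]$. For $i\in[n-1]$ we have
\[
\sigma^{rc}(i)>\sigma^{rc}(i+1)
\iff n+1-\sigma(n+1-i)>n+1-\sigma(n-i)
\iff \sigma(n-i)>\sigma(n-i+1).
\]
So $i\in\Des(\sigma^{rc})$ exactly when $n-i\in\Des(\sigma)$. Therefore $\Des(\sigma^{rc})=\{\,n-j : j\in\Des(\sigma)\,\}$, and taking cardinalities gives $\des(\sigma^{rc})=\des(\sigma)$.

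There is no real obstacle here. The only point needing care is the index bookkeeping: reversal sends position $i+1$ to $n-i$, and complementation flips the inequality, so the two sign changes cancel and a descent stays a descent. One could also argue structurally: reversal turns descents into ascents, complementation turns ascents back into descents, and for fixed points $\sigma^{rc}=w_0\sigma w_0$ with $w_0=n\,n{-}1\cdots1$, a conjugate of $\sigma$, which has the same number of fixed points.
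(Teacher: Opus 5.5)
Your proposal is correct and follows essentially the same route as the paper. Both verify directly from $\sigma^{rc}(i)=n+1-\sigma(n+1-i)$ that $i\in\Des(\sigma^{rc})\iff n-i\in\Des(\sigma)$ and that $\sigma^{rc}(i)=i\iff\sigma(n+1-i)=n+1-i$; your extra observation that $\sigma^{rc}=w_0\sigma w_0$ is a valid alternative argument for the fixed-point part.
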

\begin{proof}
For \(1\le i\le n-1\), by the definition of \(\sigma^{rc}\), we have
\[
\begin{aligned}
i\in\Des(\sigma^{rc})
&\Longleftrightarrow \sigma^{rc}(i)>\sigma^{rc}(i+1)  \\
&\Longleftrightarrow n+1-\sigma(n+1-i)>
n+1-\sigma(n-i) \\
&\Longleftrightarrow \sigma(n+1-i)<\sigma(n-i) \\
&\Longleftrightarrow n-i\in\Des(\sigma),
\end{aligned}
\]
and for $1\le i\le n$,
\[
\begin{aligned}
\sigma^{rc}(i)=i
&\Longleftrightarrow n+1-\sigma(n+1-i)=i  \\
&\Longleftrightarrow \sigma(n+1-i)=n+1-i.
\end{aligned}
\]
 It follows that
\(\sigma\mapsto\sigma^{rc}\) preserves both \(\des\) and \(\fix\).

\end{proof}

\subsection{Double-avoidance classes in \texorpdfstring{$\mathcal{P}$}{P}}\label{sec:2}

In this section, we prove Theorem~\ref{thm:main} for three double-avoiding classes $\Pi$, namely $\{132,312\}$,
$\{213,231\}$ and $ \{132,321\}$, by computing their ordinary generating functions $F_{\Pi}(t;x,y)$ and $P_{\Pi}(t;x,y)$, respectively.

\subsubsection{The class $\S_n(132,312)$} 
Given a permutation $\sigma\in\S_n$, we call $i\in\{2,3,\ldots,n\}$ an \cemph{ascent top position} (resp. a \cemph{descent bottom position}) if $\sigma(i-1)<\sigma(i)$ (resp. $\sigma(i-1)>\sigma(i)$). Let $\mathrm{Asct}(\sigma)$ be the set of all ascent tops of $\sigma$. 
A well-established bijection $\xi$ between the restricted permutation class $\S_n(132,312)$ and the power set $2^{\{2,3,\ldots,n\}}$ originally introduced by Simion and Schmidt \cite{SS85}, is defined by 
\begin{equation}\label{bijection-xi}
    \sigma\longmapsto \xi(\sigma):= \Asct(\sigma).
\end{equation}
Inversely, for any given subset $A=\{a_1<a_2<\cdots<a_m\}\in2^{\{2,3,\ldots,n\}}$,  the corresponding permutation $\sigma$ is uniquely constructed by  setting
\begin{equation}\label{bi-xi}
   \sigma(a_i)=n-m+i\,\text{ for }1\le i\le m, 
\end{equation}
while the remaining positions are filled, from left to right, by
\(n-m,n-m-1,\ldots,1\) in strictly decreasing order. 
The following lemma shows how this bijection provides a natural way to evaluate the permutation statistics $\des$ and $\ides$ on $\S_n(132,312)$.

\begin{lemma}\label{lemma-xi}
Let $\sigma\in\S_{n}(132,312)$, and set  
\begin{equation}\label{def:B}
    B=[n]\setminus \Asct(\sigma).
\end{equation} Then we have
\begin{equation}\label{equ:des-ides}
    \des(\sigma)=\ides(\sigma)=|B|-1.
\end{equation}
\end{lemma}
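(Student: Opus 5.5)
Let $m=|\Asct(\sigma)|$. Since $\Asct(\sigma)\subseteq\{2,\dots,n\}$, the set $B=[n]\setminus\Asct(\sigma)$ always contains the position $1$, and $|B|=n-m$. The plan is to compute $\des(\sigma)$ and $\ides(\sigma)$ separately from the explicit description of $\sigma=\xi^{-1}(A)$ given in \eqref{bi-xi}, with $A=\Asct(\sigma)=\{a_1<\cdots<a_m\}$. In that description the positions of $A$ carry the large values $n-m+1<\cdots<n$ in increasing order. The positions of $B$, read left to right, carry the small values $n-m,n-m-1,\dots,1$ in decreasing order.

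\emph{Descents.} Every position $i\in\{2,\dots,n\}$ is either an ascent top (so $i\in A$) or a descent bottom (so $i\in B\setminus\{1\}$). Descents $j\in\Des(\sigma)$ correspond bijectively to descent bottoms $j+1$. Hence
\[
\des(\sigma)=(n-1)-m=|B|-1 .
\]
This part needs only the definition of $\Asct$, not the avoidance hypothesis.

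\emph{Inverse descents.} For each $i\in[n-1]$ we decide whether $i+1$ appears to the left of $i$, splitting into three cases according to where the values lie.
\begin{itemize}
\item If $i,i+1\le n-m$, both values sit in $B$-positions. The $B$-values decrease from left to right, so $i+1$ precedes $i$, and $i$ is an inverse descent. This accounts for $n-m-1$ inverse descents.
\item If $i,i+1\ge n-m+1$, both values sit in $A$-positions. These values increase from left to right, so $i$ is not an inverse descent.
\item The boundary case $i=n-m$ occurs only when $m\ge1$. Here $i+1=n-m+1$ sits at position $a_1\ge2$. The value $n-m$ is the first $B$-value, placed at position $1$, since $1\in B$. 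So $n-m$ lies to the left of $n-m+1$, and $i=n-m$ is not an inverse descent.
\end{itemize}
Summing the three cases gives $\ides(\sigma)=n-m-1=|B|-1$, which proves \eqref{equ:des-ides}.

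The only delicate step is the boundary pair $(n-m,n-m+1)$. It relies on the observation that position $1$ always belongs to $B$, and hence always receives the largest small value $n-m$. The degenerate cases also need a check. When $m=0$, $\sigma$ is the decreasing permutation, and $\des(\sigma)=\ides(\sigma)=n-1$. When $n=0$, both sides are interpreted by convention. Beyond that, the argument is a direct reading of \eqref{bi-xi}, which is valid because $\xi$ is a bijection onto $2^{\{2,\dots,n\}}$.
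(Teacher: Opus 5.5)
Your proof is correct and follows the paper's argument essentially step for step. You count descents via the descent bottoms in $B\setminus\{1\}$, and you count inverse descents via the same three-way split of values: small values in decreasing $B$-positions, large values in increasing $\Asct$-positions, and the boundary pair $(n-m,n-m+1)$ settled by $\sigma(1)=n-m$. One small nit: for $n=0$ the identity would read $0=-1$, so instead of invoking a convention you should restrict the lemma to $n\ge 1$, as the paper implicitly does.
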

\begin{proof} 
We first compute $\operatorname{des}(\sigma)$. Since every index $i\in\{2,\dots,n\}$ is either an ascent top position or a descent bottom position of $\sigma$,  we see that indices in $\{2,\dots,n\}\setminus \Asct(\sigma)$ are exactly the descent bottoms of $\sigma$. Then, by \eqref{def:B}, $\des(\sigma)=|B|-1$.

It remains to compute $\operatorname{ides}(\sigma)$. Set $m=|\Asct(\sigma)|$. If $m=0$, then $\sigma=n(n-1)\cdots1$, and hence $\ides(\sigma)=n-1=|B|-1$. Thus, assume $m\ge 1$. By \eqref{bi-xi}, the permutation $\sigma$ is constructed by placing the $m$ largest values $\{n-m+1, \dots, n\}$ in strictly increasing order at positions in $\Asct(\sigma)$, and the remaining $n-m$ values $\{1, \dots, n-m\}$ in strictly decreasing order at positions in $B$. An inverse descent occurs for a value $v$ if and only if $v+1$ appears to the left of $v$ in $\sigma$. One can check that
\begin{itemize}
    \item For $v \in \{n-m+1, \dots, n-1\}$: These values are placed in increasing order, so $v+1$ is always to the right of $v$. Hence, there are no inverse descents.
    \item For $v \in \{1, \dots, n-m-1\}$: These values are placed in strictly decreasing order from left to right. Thus, the larger value $v+1$ is always placed before (to the left of) $v$. Hence, there are exactly $n - m - 1$ inverse descents.
    \item For $v = n-m$:  Since $1 \notin \Asct(\sigma)$, position $1$ belongs to $B$, giving $\sigma(1) = n-m$. The value $v+1 = n-m+1$ is placed at the first ascent position $a_1 \ge 2$. Thus, $v$ is to the left of $v+1$, which means that $v$ is not an inverse descent.
\end{itemize}
It follows from the above argument that $\ides(\sigma)=|B|-1$.
\end{proof}
\begin{example}
Let $\sigma=6\,7\,5\,4\,8\,9\,3\,10\,2\,1\in\S_{10}$. On one hand, we have $\Asct(\sigma)=\{2,5,6,8\}$ and $B=[10]\setminus\Asct(\sigma)=\{1,3,4,7,9,10\}$. On the other hand, we have $\des(\sigma)=|\{2,3,6,8,9\}|$ and $\ides(\sigma)=|\{1,2,3,4,5\}|$. Thus $\des(\sigma)=\ides(\sigma)=|B|-1.$
\end{example}

\begin{theorem}\label{prop:132312}
Let $T_1=\{132,312\}$. We have
\begin{equation}\label{gen-F-132312}
    F_{T_1}(t;x,y)
=1+\frac{yt}{1-yt}
+\frac{x t^2(1+xyt)(1-xt)}{(1-yt)(1-x^2t^2)(1-(1+x)t)}.
\end{equation}
\end{theorem}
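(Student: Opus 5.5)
The plan is to compute $F_{T_1}$ directly through the Simion--Schmidt bijection $\xi$ of \eqref{bijection-xi}. I would encode $\sigma\in\S_n(132,312)$, $n\ge1$, as a word $w=w_1w_2\cdots w_n$ over $\{U,D\}$. Here $w_1=D$, and for $i\ge2$ we set $w_i=U$ if $i\in\Asct(\sigma)$ and $w_i=D$ otherwise. By the proof of Lemma~\ref{lemma-xi}, $\des(\sigma)=|B|-1$, which is the number of letters $D$ minus one. So the $x$-weight of $\sigma$ is read off directly from $w$. The whole task is then to express $\fix(\sigma)$ in terms of $w$ and sum over all words.

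\emph{Step 1: fixed points on $A=\Asct(\sigma)=\{a_1<\dots<a_m\}$.} By \eqref{bi-xi} we have $\sigma(a_i)=n-m+i$. Since only $m-i$ positions of $A$ lie after $a_i$, we always have $a_i\le n-m+i$. Equality holds exactly when $a_i,a_{i+1},\dots,a_m$ are the last $m-i+1$ positions $n-m+i,\dots,n$. Hence the fixed points in $A$ are precisely the positions in the maximal terminal run of $U$'s in $w$. If this run is $U^k$, they contribute $k$ fixed points.

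\emph{Step 2: fixed points on $B$.} The values placed on $B$ decrease from left to right while the positions increase, so at most one position of $B$ is fixed. Suppose $p$ is the $j$-th element of $B$, so $\sigma(p)=n-m+1-j$. The condition $\sigma(p)=p$ says that the number $n-p$ of positions after $p$ equals the number $m+j-1$ of values larger than $p$. Splitting both counts into letters before and after $p$ turns this into a clean word condition: $p$ is fixed if and only if $w_p=D$ and the number of $D$'s strictly after $p$ equals $p-1$. Because this condition can hold for at most one $p$, we get
\[
\fix(\sigma)=k+\epsilon(w),
\]
where $k$ is the length of the terminal $U$-run and $\epsilon(w)\in\{0,1\}$ indicates whether such a $p$ exists.

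\emph{Step 3: summation.} The decreasing permutations form one family of words, and identity-like words (a single $D$ followed by $U$'s) give the term $yt/(1-yt)$ together with the empty word. For the rest I would write
\[
w=w'\,U^k,\qquad w' \text{ nonempty and ending in } D,
\]
so that the suffix contributes $1/(1-yt)$. It then remains to enumerate $w'$ by its number of $D$'s and by $\epsilon$. I would use $y^{\epsilon}=1+(y-1)\epsilon$. The sum without the $\epsilon$-marking is an easy geometric series in $(1+x)t$. The words with $\epsilon=1$ factor as
\[
(\text{arbitrary prefix of length } p-1)\cdot D\cdot(\text{suffix containing exactly } p-1 \text{ letters } D,\ \text{ending in } D).
\]
Summing over $p$ pairs each prefix letter with a suffix $D$, and this pairing should produce the factors $1-x^2t^2$ and $1+xyt$ in \eqref{gen-F-132312}.

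I expect this last summation to be the main obstacle. Two things need care there. The suffix $D$'s must interleave with arbitrary $U$'s, and the fixed $D$ may itself be the final letter of $w'$, which happens when $p-1=0$. I would first handle this boundary case separately, and then do the main sum as a convolution of $\sum_p \bigl((1+x)t\bigr)^{p-1}$-type factors with binomial sums counting suffix words with a fixed number of $D$'s. As a check, I would verify the result against the small cases $n\le 4$ before simplifying to \eqref{gen-F-132312}.
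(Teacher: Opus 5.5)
Your proposal is correct and is essentially the paper's proof in $U/D$-word language. The $D$-positions form the paper's set $B$, and your terminal $U$-run has length $n-\max B$. Your condition that exactly $p-1$ letters $D$ follow position $p$ is the paper's $N-j=b_j-1$, i.e.\ $b_j+j=N+1$. The identity $y^{\epsilon}=1+(y-1)\epsilon$ is the paper's $H=H_0+(y-1)H_1$, and your free summation over the prefix plays the role of the paper's binomial sum $\sum_j\binom{p-2}{j-2}x^{j-2}$.

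One slip needs fixing in Step~3. For $p\ge2$ the prefix of length $p-1$ is not arbitrary, because its first letter is forced to be $w_1=D$; an unconstrained prefix would also count words beginning with $U$. With this constraint, the $\epsilon=1$ words contribute
\[
\sum_{p\ge2}t\bigl((1+x)t\bigr)^{p-2}\,xt\,\bigl(xt/(1-t)\bigr)^{p-1}=\frac{x^2t^3}{(1+xt)\bigl(1-(1+x)t\bigr)},
\]
which is exactly the paper's $H_1-t$.
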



\begin{proof}

Suppose that  $B=\{b_1<b_2<\cdots<b_N\}$.
Under the reverse construction of the Simion--Schmidt bijection $\xi$ defined by \eqref{bi-xi}, we have $\sigma(b_j)=N+1-j \text{ for }
 1\leq j\leq N$, i.e., the entries in the
positions belonging to \(B\) are filled, from left to right, by
$N,N-1,\ldots,1.$
It follows that \(b_j\) is a fixed point if and only if
\begin{equation}
    b_j=N+1-j,
\end{equation}
or equivalently,
\begin{equation}
    b_j+j=N+1.
\end{equation}
Since the sequence \(b_j+j\) is strictly increasing, this can happen for at
most one index \(j\). Define
\begin{equation}\label{equ:epsilon-B}
    \epsilon(B)=
\begin{cases}
1,&\text{if there exists }j\text{ such that }b_j+j=N+1;\\[4pt]
0,&\text{otherwise.}
\end{cases}
\end{equation}

Since \(b_N\) is the maximum element of the set \(B\), it follows that all positions \(b_N + 1, b_N + 2, \ldots, n\) are contained in \(\Asct(\sigma)\).  It is easy to see that by the reverse construction of $\xi$, these positions are forced to be fixed points. It follows that
\begin{equation}\label{equ:fix-point}
    \fix(\sigma)=n-b_N+\epsilon(B).
\end{equation}

By combining the bijection $\xi$ with Lemma~\ref{lemma-xi} and~\eqref{equ:fix-point}, the polynomials $F_{\Pi}(t;x,y)$ (for $\Pi=T_1=\{132,312\}$) defined in~\eqref{gen-F} can be reformulated in the following manner,
\begin{equation}
   F_{T_1}(t;x,y)=1+\sum_{n\ge 1}\sum_{\substack{B\subseteq[n]\\1\in B}}x^{|B|-1}y^{n-\max B+\epsilon(B)}t^n, 
\end{equation}
where $\max B$ denotes the maximum element of set $B$. By first fixing $B$ and summing over all $n\ge \max B$, we obtain
\begin{align}\label{F-set-version}
F_{T_1}(t;x,y)
&=
1+
\sum_{\substack{B\subseteq \mathbb{P}\\1\in B}}
x^{|B|-1}y^{\epsilon(B)}
\sum_{n\geq \max B}y^{n-\max B}t^n\nonumber  \\
&=
1+
\frac{1}{1-yt}
\sum_{\substack{B\subseteq \mathbb{P}\\1\in B}} x^{|B|-1}y^{\epsilon(B)}t^{\max B}.
\end{align}
Define the enumerative polynomials
\begin{equation}\label{H-set}
    H(t;x,y):=
\sum_{\substack{B\subseteq \mathbb{P}\\1\in B}} x^{|B|-1}y^{\epsilon(B)}t^{\max B}.
\end{equation}
We now compute \(H(t;x,y)\). First ignore the statistic \(\epsilon(B)\), and set
\begin{equation}\label{equ:H_0}
    H_0(t;x):=\sum_{\substack{B\subseteq \mathbb{P}\\1\in B}} x^{|B|-1}t^{\max B}.
\end{equation}
If \(B=\{1\}\), the contribution is \(t\). If \(\max B=L\geq 2\), then
\(1\) and \(L\) are forced to be in \(B\), while the elements of
\(\{2,3,\ldots,L-1\}\) may be chosen freely. Hence, 
\begin{equation}\label{gen-fun-H_0}
    H_0(t;x)=
t+\sum_{L\geq 2}x(1+x)^{L-2}t^L.
\end{equation}
Next,  we compute the contribution of those sets $B$ satisfying $\epsilon(B)=1$. Let
\begin{equation}\label{def:H_1}
    H_1(t;x):=\sum_{\substack{B\subseteq \mathbb{P}\\1\in B,\,\epsilon(B)=1}}x^{|B|-1}t^{\max B}.
\end{equation}
The set \(B=\{1\}\) satisfies $\epsilon(B)=1$ and contributes \(t\). Now suppose that $B=\{b_1<b_2<\cdots<b_N\}$ has \(N\geq 2\) elements and satisfies \(\epsilon(B)=1\). Then there exist a unique  index \(j\)  such that
\begin{equation}\label{relation:b_j-N}
    b_j+j=N+1.
\end{equation}
Set \(p=b_j\). Then \(N=p+j-1\). Since \(1=b_1\) and \(b_j=p\), the
elements of \(B\) before \(p\) consist of \(1\), together with \(j-2\)
elements chosen from \(\{2,3,\ldots,p-1\}\). Thus there are
\[
\binom{p-2}{j-2}
\]
choices for the part of \(B\) before \(p\).

On the other hand, the number of elements of $B$ after $p$ is \(N - j = b_j - 1\). If $L=\max B$, then one of these elements is $L$, and the remaining $p-2$ elements are chosen from $\{p+1,p+2,\ldots,L-1\}.$
Thus, for fixed \(p\) and \(L\), the number of choices after \(p\) is
$$\binom{L-p-1}{p-2}.$$
Summing over all possible \(L\), we obtain
\begin{equation}
    \sum_{L\ge 2p-1}\binom{L-p-1}{p-2}t^L
=
\frac{t^{2p-1}}{(1-t)^{p-1}}.
\end{equation}
Moreover, since \(N=p+j-1\), the exponent of \(x\) is
$|B|-1=N-1=p+j-2.$ Therefore
\begin{equation}
    H_1(t;x)
=
t+
\sum_{p\ge 2}\sum_{j=2}^{p}
\binom{p-2}{j-2}
x^{p+j-2}
\frac{t^{2p-1}}{(1-t)^{p-1}}.
\end{equation}
Setting \(k=p-2\) and \(i=j-2\), we get

\begin{equation}
H_1(t;x)-t=
\sum_{k\ge 0}
\frac{x^{k+2}t^{2k+3}}{(1-t)^{k+1}}
\sum_{i=0}^{k}\binom{k}{i}x^i
\end{equation}
Combining the binomial theorem  and the geometric series identity, we derive 
\begin{equation}\label{gen-fun-H_1}
    H_1(t;x)=
t+\frac{x^2t^3}{1-t-x(1+x)t^2}.
\end{equation}
Since \(y\) only records whether the possible extra fixed point inside
\(B\) exists, we have
\begin{equation}\label{relation-H-H_0-H_1}
    H(t;x,y)=H_0(t;x)+(y-1)H_1(t;x).
\end{equation}
Substituting \eqref{gen-fun-H_0} and \eqref{gen-fun-H_1} into \eqref{relation-H-H_0-H_1}, and using
\[
1-t-x(1+x)t^2=(1+xt)(1-(1+x)t),
\]
we obtain
\begin{equation}
    H(t;x,y)
=
yt+
\frac{xt^2(1+xyt)(1-xt)}
{(1-x^2t^2)(1-(1+x)t)}.
\end{equation}
Combining this with \eqref{F-set-version} and \eqref{H-set}, we obtain \eqref{gen-F-132312}.
\end{proof}

\begin{theorem}
Let $T_1=\{132,312\}$.  We have
\begin{equation}\label{gen-P-132312}
    P_{T_1}(t;x,y)
=
1+\frac{yt}{1-yt}
+
\frac{x t^2(1+xyt)(1-xt)}
{(1-yt)(1-x^2t^2)(1-(1+x)t)}.
\end{equation}
\end{theorem}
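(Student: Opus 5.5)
The plan is to mimic the proof of Theorem~\ref{prop:132312}: parametrize $\S_n(132,312)$ through the Simion--Schmidt bijection $\xi$ of \eqref{bijection-xi}, with $B=[n]\setminus\Asct(\sigma)$ as in \eqref{def:B}. By Lemma~\ref{lemma-xi}, $\ides(\sigma)=|B|-1=\des(\sigma)$. So the $x$-weight is the same as before, and the only new work is to express $\pix(\sigma)$ in terms of $B$ and $n$.

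\emph{Step 1: $\pix$ in terms of $B$.} Write $B=\{1=b_1<b_2<\cdots<b_N\}$.

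If $N=1$, then $\sigma$ is the identity and $\pix(\sigma)=n$.

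If $N\ge 2$, positions $1,\dots,b_2-1$ carry increasing values, since positions $2,\dots,b_2-1$ are ascent tops. Hence the longest increasing prefix has length $k=b_2-1$, and $\sigma(k)>\sigma(k+1)$. Let $r\ge1$ be the length of the maximal run of consecutive integers $b_2,b_2+1,\dots,b_2+r-1$ contained in $B$. Then the first ascent of $\sigma$ after position $k$ is at position $j=k+r$, or $j=n$ if the run reaches $n$; this is consistent with the convention that $n$ counts as an ascent. In the pixed factorization $\sigma=\iota\delta$, the prefix $\iota$ must have length $p\in\{k-1,k\}$, since otherwise $\delta$ would start with an ascent. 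The first ascent of $\delta$ is then at position $j-p$. Therefore $\pix(\sigma)=b_2-1$ if $r$ is even, and $\pix(\sigma)=b_2-2$ if $r$ is odd. I would check this against the example $6\,7\,5\,4\,8\,9\,3\,10\,2\,1$: here $b_2=3$ and $r=2$, so $\pix=2$ and $\iota=67$, $\delta=54893\ldots$, whose first ascent is at $2$.

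\emph{Step 2: encode $(B,n)$ as a word.} Encode the pair as a word over the positions $2,\dots,n$, with letter $A$ for an ascent-top position and letter $D$ for a position of $B$. The generating function becomes a sum over such words, and I split it into three pieces:
\[
\sum_{n\ge1} y^n t^n=\frac{yt}{1-yt} \quad\text{(the all-$A$ words)},
\]
plus words of the shape $A^{b_2-2}D^{r}$, which end with the run, plus words of the shape $A^{b_2-2}D^{r}A\,w$ with $w$ an arbitrary word in $\{A,D\}$. The weights are as follows: the prefix $A^{b_2-2}$ contributes $(yt)^{b_2-2}$ together with the extra factor $y$ or $1$ according to the parity of $r$; the leading $1$ contributes $t$; each $D$ contributes $xt$; each $A$ in the tail contributes $t$. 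Summing the geometric series gives a prefix factor $t\,y/(1-yt)$ (for $b_2\ge2$, with $\pix=b_2-1$ or $b_2-2$). The run factor is $\frac{x^2t^2}{1-x^2t^2}$ for even $r$ times $y$, plus $\frac{xt}{1-x^2t^2}$ for odd $r$. The tail factor is $1+\frac{t}{1-(1+x)t}$. The parity split by $r$ is exactly what produces the factor $(1-x^2t^2)$ and the numerator $(1+xyt)$.

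\emph{Step 3: algebra.} Finally, I add the three pieces and simplify, aiming for the right side of \eqref{gen-P-132312}. Using $1+\frac{t}{1-(1+x)t}=\frac{1-xt}{1-(1+x)t}$, the run and prefix factors should combine to $\frac{xt^2(1+xyt)}{(1-yt)(1-x^2t^2)}$. I expect the main obstacle to be Step~1: getting the boundary cases right. These are the case where the run of $B$ extends to position $n$, where the convention ``$n$ is an ascent'' determines the parity, and the small cases $b_2=2$ with $\pix=0$ and $r$ odd. The exponent bookkeeping of $y$ in the prefix factor must be checked carefully on small $n$, for instance $n\le 3$, against direct enumeration.
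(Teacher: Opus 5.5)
Your proposal is correct and is essentially the paper's proof. It uses the same Simion--Schmidt parametrization, the same formula $\pix(\sigma)=b_2-1$ or $b_2-2$ according to the parity of the run length $r$, and the same parity split; your word encoding of positions $2,\dots,n$ simply folds the paper's factor $1/(1-t)$ (from summing over $n\ge\max B$) into your tail factor $\frac{1-xt}{1-(1+x)t}$. One slip to fix: the parity-dependent $y$ already sits in your run factor $\frac{xt(1+xyt)}{1-x^2t^2}$, so the prefix factor must be $\frac{t}{1-yt}$, not $\frac{ty}{1-yt}$ (your Step~3 target $\frac{xt^2(1+xyt)}{(1-yt)(1-x^2t^2)}$ is consistent only with the former), and you should add the term $1$ for $n=0$.
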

\begin{proof}
We first consider the case $\Asct(\sigma)=\{2,3,\ldots,n\}$. Then $\sigma=12\cdots n$ and $B=[n]\setminus \Asct(\sigma)=\{1\}$. Hence, by the pixed factorization, we have $\pix(\sigma)=n$, and by Lemma~\ref{lemma-xi}, we have $\ides(\sigma)=0$. Therefore, this case and the empty permutation contribute
\begin{equation}\label{gen-trival}
    1+\sum_{n\geq 1} y^n t^n
=1+
\frac{yt}{1-yt}.
\end{equation}
For the remaining part, assume that \(|B|\geq 2\) and 
$B=\{b_1<b_2<\cdots<b_N\}$ with $b_1=1$.
Since \(B=[n]\setminus \Asct(\sigma)\), the positions
$2,3,\ldots,b_2-1$
belong to \(\Asct(\sigma)\). Hence the maximal
increasing prefix of \(\sigma\) has length $b_2-1$. Now let \(r\geq 1\) be the length of the maximal consecutive segment of elements
of \(B\) beginning at \(b_2\), i.e., 
\begin{equation}
    b_2,b_2+1,\ldots,b_2+r-1\in B,
\end{equation}
and either \(b_2+r\notin B\) or \(b_2+r>n\). 
Define 
\begin{equation}\label{eta-B}
     \eta(B)=
\begin{cases}
0,&\text{if $r$ is even;}\\[4pt]
1,&\text{if $r$ is odd.}
\end{cases}
\end{equation}
One can check that the number of  pixed points of $\sigma$ is decided by the parity of $r$,
\begin{equation}\label{equ:pix-B}
   \pix(\sigma)=b_2-1-\eta(B), 
\end{equation}
or equivalently,
\begin{equation}
    \pix(\sigma)=
\begin{cases}
b_2-1, & \text{if } r \text{ is even};\\[4pt]
b_2-2, & \text{if } r \text{ is odd}.
\end{cases}
\end{equation}
By combining the bijection $\xi$ with Lemma~\ref{lemma-xi}, \eqref{equ:pix-B} and \eqref{gen-trival}, the polynomials $P_{\Pi}(t;x,y)$ (for $\Pi=T_1=\{132,312\}$), defined in \eqref{gen-P}, can be reformulated as follows:
\begin{equation}
    P_{T_1}(t;x,y)=1+\frac{yt}{1-yt}+\sum_{n\ge 2}\sum_{\substack{B\subseteq[n]\\1\in B,\, |B|\ge 2}}x^{|B|-1}y^{b_2-1-\eta(B)}t^n.
\end{equation}
By first fixing $B$ and summing over all $n\ge \max B$, we obtain
\begin{align}\label{rewrite-P}
    P_{T_1}(t;x,y)&=1+\frac{yt}{1-yt}+\sum_{\substack{B\subseteq[n]\\1\in B,\, |B|\ge 2}}x^{|B|-1}y^{b_2-1-\eta(B)}\sum_{n\ge \max B}t^n\notag\\
    &=1+\frac{yt}{1-yt}+\frac{1}{1-t}\sum_{\substack{B\subseteq\mathbb{P}\\1\in B,\, |B|\ge 2}}x^{|B|-1}y^{b_2-1-\eta(B)}t^{\max B}.
\end{align}
Define the following enumerative polynomials
\begin{equation}\label{def-K}
    K(t;x,y)=\sum_{n\ge 2}\sum_{\substack{B\subseteq \mathbb{P}\\1\in B,\,|B|\ge 2}}x^{|B|-1}y^{b_2-1-\eta(B)}t^{\max B}.
\end{equation}
Combining this with \eqref{rewrite-P}, we have
\begin{equation}\label{gen-P-K}
    P_{\Pi}(t;x,y)=1+\frac{yt}{1-yt}+\frac{K(t;x,y)}{1-t}.
\end{equation}
 We now enumerate finite subsets $B$ of positive integers containing 1 and satisfying $|B|\ge 2$. Fix \(\ell:=b_2-1\geq 1\) and \(r\geq 1\). Then \(B\) begins
as
\[
B=\{1\}\cup\{\ell+1,\ell+2,\ldots,\ell+r\}\cup\{\text{later elements}\}.
\]
We now count the possible later elements. 
\begin{itemize}
    \item If there are no later elements, then
$\max B=\ell+r,$ and the contribution is \(t^{\ell+r}\). 
\item If there are later elements, then
\(\ell+r+1\notin B\), by maximality of the segment of consecutive elements, and
the next possible elements lie in
\[
\{\ell+r+2,\ell+r+3,\ldots \}.
\]
If the maximum is \(L:=\max B\geq \ell+r+2\), then \(L\) is forced to belong to \(B\),
and the elements of
\[
\{\ell+r+2,\ell+r+3,\ldots,L-1\}
\]
may be chosen freely. Thus the contribution of the later part is
\[
\sum_{L\geq \ell+r+2}
x(1+x)^{L-(\ell+r+2)}t^L
=
\frac{x t^{\ell+r+2}}{1-(1+x)t}.
\]
\end{itemize}
Therefore, for fixed \(\ell\) and \(r\), the contribution of the final part is
\begin{equation}\label{gen-fix-l-r}
    t^{\ell+r}
+
\frac{x t^{\ell+r+2}}{1-(1+x)t}
=
t^{\ell+r}
\frac{(1-t)(1-xt)}{1-(1+x)t}.
\end{equation}
Since \(x\) records \(|B|-1\), the segment $\ell+1,\ell+2,\ldots,\ell+r$ contributes \(x^r\). Combining this with \eqref{gen-fix-l-r}, we derive that $K(t;x,y)$ defined in \eqref{def-K} is equal to 
\begin{equation}\label{equ:K}
    K(t;x,y)=
\frac{(1-t)(1-xt)}{1-(1+x)t}
\sum_{\ell\geq 1}\sum_{r\geq 1}
x^r t^{\ell+r}
y^{\ell-\chi(2\,\nmid \,r)},
\end{equation}
where $\chi(P)$ is equal to $1$ if the proposition $P$ is true, and $0$ otherwise.
Separating \(r\) according to its parity, we get
\begin{align}
K(t;x,y)
&=
\frac{(1-t)(1-xt)}{1-(1+x)t}
\sum_{\ell\geq 1} t^\ell
\left(
y^\ell
\sum_{\substack{r\geq 1\\ r\text{ even}}}(xt)^r
+
y^{\ell-1}
\sum_{\substack{r\geq 1\\ r\text{ odd}}}(xt)^r
\right) \notag \\
&=
\frac{(1-t)(1-xt)}{1-(1+x)t}
\sum_{\ell\geq 1} t^\ell
\left(
y^\ell\frac{x^2t^2}{1-x^2t^2}
+
y^{\ell-1}\frac{xt}{1-x^2t^2}
\right) \notag\\
&=
\frac{x t^2(1+xyt)(1-t)(1-xt)}
{(1-yt)(1-x^2t^2)(1-(1+x)t)}.\label{gen-K}
\end{align}
Substituting $K(t;x,y)$ by \eqref{gen-K} in \eqref{gen-P-K}, we obtain \eqref{gen-P-132312}.
\end{proof}
\subsubsection{The class $\S_n(213,231)$} 
We first describe the structure of  permutations in $\S_n(213,231)$. Let $\sigma\in\S_n(213,231)$, and write 
\begin{equation}
  \sigma=L\,n\,R.  
\end{equation}
where $L$ and $R$ are the words to the left and right of the letter $n$, respectively. Since $\sigma$ avoids 213, the word $L$ must be increasing. Moreover, since $\sigma$ avoids $231$, every letter in $L$ must be smaller than every letter in $R$. It follows that
\begin{equation}
    L=12\cdots a\, \text{ for some } 0\le a\le n-1.
\end{equation}
More precisely,  every $\sigma\in\S_n(213,231)$ has a unique decomposition
\begin{equation}\label{equ:decom}
  \sigma=12\cdots a\,n\,\tau^{^{+}a}, 
\end{equation}
where $\tau^{^{+}a}$ is obtained by adding $a$ to each letter of $\tau\in\S_{n-a-1}(213,231)$.

\begin{example}
Let
$\sigma=1237465\in \mathfrak{S}_7(213,231).$
Then the letter \(7\) separates \(\sigma\) as
$\sigma=L\,7\,R$, where
$L=123$ and $R=\tau^{+3}=465.$ Thus \(a=3\). Subtracting \(a=3\) from each letter of \(R\), we obtain $\tau=132\in\S_3(213,231)$.
\end{example}

\begin{theorem}
    Let $T_2=\{213,231\}$. We have
 \begin{equation}\label{equ:T_2}
     F_{T_2}(t;x,y)=1+\frac{yt}{1-yt}
+\frac{x t^2(1+xyt)(1-xt)}{(1-yt)(1-x^2t^2)(1-(1+x)t)}.
 \end{equation}
\end{theorem}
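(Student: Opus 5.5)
The plan is to reduce this case to the class $\S_n(132,312)$ already handled in Theorem~\ref{prop:132312}, using the reverse-complement map and Lemma~\ref{re-com}. I will not work with the decomposition \eqref{equ:decom} here. The reason is that tracking fixed points through $\tau^{+a}$ requires the positions $i$ with $\tau(i)=i+1$ rather than $\fix(\tau)$, so a direct recursion does not close. Since $\des$ and $\fix$ are both invariant under $\sigma\mapsto\sigma^{rc}$, it is natural to ask where this map sends the class $\S_n(213,231)$.

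\textbf{Step 1.} Compute the images of the two patterns under reverse-complement, using \eqref{com-re-operator}. For $213$, reversal gives $312$ and complementation then gives $132$. For $231$, reversal gives $132$ and complementation then gives $312$. Hence $\{213,231\}^{rc}=\{132,312\}$.

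\textbf{Step 2.} Show that $\sigma$ contains $\tau$ if and only if $\sigma^{rc}$ contains $\tau^{rc}$. If the subsequence of $\sigma$ at indices $i_1<\cdots<i_k$ is order-isomorphic to $\tau$, then the subsequence of $\sigma^{rc}$ at the indices $n+1-i_k<\cdots<n+1-i_1$ is order-isomorphic to $\tau^{rc}$. The map $\sigma\mapsto\sigma^{rc}$ is an involution, so this gives the equivalence in both directions. Together with Step 1, it follows that $\sigma\mapsto\sigma^{rc}$ is a bijection from $\S_n(213,231)$ onto $\S_n(132,312)=\S_n(T_1)$.

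\textbf{Step 3.} By Lemma~\ref{re-com}, this bijection preserves the pair $(\des,\fix)$. Therefore
\[
\sum_{\pi\in\S_n(T_2)}x^{\des(\pi)}y^{\fix(\pi)}=\sum_{\pi\in\S_n(T_1)}x^{\des(\pi)}y^{\fix(\pi)}
\]
for every $n\ge 0$. Summing over $n$ gives $F_{T_2}(t;x,y)=F_{T_1}(t;x,y)$, and \eqref{gen-F-132312} then yields \eqref{equ:T_2}.

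There is no real obstacle in this argument. The only point needing care is Step 1: the pattern images must be computed in the correct order of reversal and complement, and the resulting set must be exactly $T_1$ rather than merely a set in the same symmetry class. As a sanity check, I would compare $n=3$ directly. The class $\S_3(213,231)=\{123,132,312,321\}$ maps under reverse-complement to $\{123,213,231,321\}=\S_3(132,312)$. Both sides then contribute $y^3+xy+xy+x^2y$, which agrees with the $t^3$ coefficient of \eqref{gen-F-132312}.
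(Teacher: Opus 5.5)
Your proof is correct and is essentially the paper's argument: the reverse-complement map is a bijection from $\S_n(213,231)$ onto $\S_n(132,312)$ that preserves $(\des,\fix)$ by Lemma~\ref{re-com}, so $F_{T_2}=F_{T_1}$; you just spell out the pattern computation that the paper leaves implicit. One small slip, in your sanity check rather than the proof: $312\in\S_3(213,231)$ (equivalently $231\in\S_3(132,312)$) has no fixed point, so the $n=3$ sum is $y^3+xy+x+x^2y$, which matches the $t^3$ coefficient of \eqref{gen-F-132312}, whereas $y^3+2xy+x^2y$ as you wrote it does not.
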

\begin{proof}   
By Lemma \ref{re-com}, the reversal-complement map $\sigma\mapsto\sigma^{rc}$ restricts a bijection between $\S_n(132,312)$ and $\S_n(213,231)$, thus we have $F_{T_2}(t;x,y)=F_{T_1}(t;x,y), \text{ where }  T_1=\{132,312\}.$
    By the formula for $F_{T_1}(t;x,y)$ given in \eqref{gen-F-132312}, we obtain \eqref{equ:T_2}.
\end{proof}
\begin{theorem}
    Let $T_2=\{213,231\}$. We have
 \begin{equation}\label{equ:P_2}
     P_{T_2}(t;x,y)=1+\frac{yt}{1-yt}
+\frac{x t^2(1+xyt)(1-xt)}{(1-yt)(1-x^2t^2)(1-(1+x)t)}.
 \end{equation}
\end{theorem}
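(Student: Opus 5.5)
The reverse-complement argument used for $F_{T_2}$ does not apply here: $\sigma\mapsto\sigma^{rc}$ does not preserve $\ides$ or $\pix$. Instead I would compute $P_{T_2}$ directly from the decomposition \eqref{equ:decom}, $\sigma=12\cdots a\,n\,\tau^{+a}$ with $\tau\in\S_{n-a-1}(213,231)$, tracking how $\ides$ and $\pix$ pass from $\tau$ to $\sigma$. The goal is a small linear system of generating functions.

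\emph{Step 1: $\ides$.} In $\sigma$ the values $1,\dots,a$ appear in increasing order and $a$ lies to the left of $a+1$, so none of $1,\dots,a$ is an inverse descent. The value $n-1$ is an inverse descent exactly when $\tau$ is nonempty, since then $n-1$ lies in $R$ to the right of $n$. The inverse descents among the values $a+1,\dots,n-2$ are shifted copies of those of $\tau$. Hence $\ides(\sigma)=\ides(\tau)+\chi(\tau\neq\varnothing)$.

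\emph{Step 2: $\pix$.} If $\tau=\varnothing$, then $\sigma$ is the identity and $\pix(\sigma)=n$. Otherwise the maximal increasing prefix of $\sigma$ is $12\cdots a\,n$. Let $f(\tau)$ be the first ascent of $\tau$, with the convention that position $|\tau|$ counts as an ascent.
\begin{itemize}
\item If $f(\tau)$ is even, then $\tau^{+a}$ is a desarrangement and $\pix(\sigma)=a+1$.
\item If $f(\tau)$ is odd, then $n\,\tau^{+a}$ has first ascent $f(\tau)+1$, which is even. So $\pix(\sigma)=a$.
\end{itemize}
This matches the parity rule \eqref{equ:pix-B} used for $T_1$. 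I regard this step as the main point to justify carefully. Concretely, I must check that the pixed factorization really takes $\iota$ to be the longest prefix after which the remainder is a desarrangement, so that only these two options occur.

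\emph{Step 3: generating functions for $\tau$.} Let $E(t;x)$ and $O(t;x)$ be the generating functions of nonempty $\tau\in\S_m(213,231)$ with $f(\tau)$ even, respectively odd, weighted by $x^{\ides(\tau)}t^{m}$. Set $A=E+O$. Decompose $\tau=12\cdots b\,m\,\rho^{+b}$ again and split on $b$.
\begin{itemize}
\item If $b\ge1$, position $1$ is an ascent, so $f(\tau)=1$ is odd.
\item If $b=0$ and $\rho=\varnothing$, then $\tau=1$ and $f(\tau)=1$ is odd.
\item If $b=0$ and $\rho\neq\varnothing$, then $\tau=m\rho$ and $f(\tau)=f(\rho)+1$, so the parity flips.
\end{itemize}
Using Step 1 for the $\ides$ weights, this gives
\[
E=xt\,O,\qquad O=t+xt\,E+\frac{t^2}{1-t}\,(1+xA).
\]
This system is linear, so I would solve it explicitly for $E$ and $O$.

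\emph{Step 4: assembly.} Summing over $a\ge0$ gives
\[
P_{T_2}=1+\sum_{a\ge0}\Bigl(y^{a+1}t^{a+1}+x\,t^{a+1}\bigl(y^{a+1}E+y^{a}O\bigr)\Bigr)
=1+\frac{yt}{1-yt}+\frac{xt\,(yE+O)}{1-yt}.
\]
It then remains to verify by rational-function algebra that $xt(yE+O)$ equals
\[
\frac{xt^2(1+xyt)(1-xt)}{(1-x^2t^2)(1-(1+x)t)},
\]
using the factorization $1-t-x(1+x)t^2=(1+xt)(1-(1+x)t)$ from the proof for $T_1$. As a sanity check I would compare small coefficients, for instance $n=2,3$, against a direct enumeration.
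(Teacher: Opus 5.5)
Your proposal is correct and follows essentially the paper's proof: the same decomposition $\sigma=12\cdots a\,n\,\tau^{+a}$, the same rules $\ides(\sigma)=\ides(\tau)+1$ and $\pix(\sigma)\in\{a,a+1\}$ according to whether $\tau$ is a desarrangement, and a linear system whose unknowns $E$ and $O$ are just the paper's desarrangement series $B$ and the complementary series $A-1-B$; your system does solve to $O=t/\bigl((1+xt)(1-(1+x)t)\bigr)$ and $E=xtO$, which gives the stated formula.

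Two small remarks: $\sigma\mapsto\sigma^{rc}$ actually does preserve $\ides$, since $(\sigma^{rc})^{-1}=(\sigma^{-1})^{rc}$, so only the failure for $\pix$ blocks that shortcut. The point you flag in Step 2 is immediate from the uniqueness of the pixed factorization: exhibiting one split into an increasing prefix and a desarrangement suffices.
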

\begin{proof}
From the decomposition \eqref{equ:decom}, if $a=n-1$, then $\sigma=12\cdots n$. Hence, $\ides(\sigma)=0$ and $\pix(\sigma)=n$. Together with the empty permutation, these identity permutations contribute 
    \begin{equation}\label{equ:identity}
    1+\sum_{n\geq 1}y^n t^n=1+\frac{yt}{1-yt}.
 \end{equation}
 Now suppose $a\le n-2$, and put $m=n-a-1\ge 1$. Then
 \begin{equation}\label{dec:sigma-tau}
  \sigma=12\cdots a\,n\,\tau^{^{+}a}, \text{ with }\tau\in\S_{m}(213,231).
\end{equation}
Since $n$ occurs before all letters of $\tau^{^{+}a}$, the pair $(n-1,n)$ always creates one new inverse descent. The remaining inverse descents are exactly those coming from $\tau$. Hence,
\begin{equation}\label{ides-P-T_2}
    \ides(\sigma)=\ides(\tau)+1.
\end{equation}
For the pixed statistic, we distinguish whether $\tau$ is a desarrangement. If $\tau$ is a desarrangement, then the suffix $\tau^{^{+}a}$ is itself a desarrangement. By the pixed factorization of $\sigma$, we have $\pix(\sigma)=a+1$. 
If $\tau$ is not a desarrangement, then the prefix cannot include the letter $n$. In this case the suffix $n\,\tau^{^{+}a}$ is a desarrangement. Then we have $\pix(\sigma)=a$. Therefore,
 Therefore
\begin{equation}\label{pix-T_2}
    \pix(\sigma)=
\begin{cases}
a+1, & \text{if } \tau \text{ is a desarrangement};\\[5pt]
a, & \text{otherwise}.
\end{cases}
\end{equation}
Define the following two enumerative polynomials 
\begin{equation}
    A(t;x)=\sum_{n\ge 0}\Bigg(\sum_{\sigma\in\S_{n}(213,231)}x^{\ides(\sigma)}\Bigg)t^n,
\end{equation}
and 
\begin{equation}
    B(t;x)=\sum_{n\ge 1}\left(\sum_{\substack{\sigma\in\S_{n}(213,231)\\\sigma\text{ is a desarrangement}}}x^{\ides(\sigma)}\right)t^n.
\end{equation}

We first compute \(A(t;x)\). Together with the empty permutation, identity permutations contribute
\[
1+\sum_{n\ge1}t^n=\frac{1}{1-t}.
\]
For the non-identity part, using the decomposition \eqref{dec:sigma-tau}, the initial segment
\(12\cdots a\) contributes \(t^a\), the letter \(n\) contributes \(t\), and
by \eqref{ides-P-T_2} it also contributes a factor \(x\). The remaining nonempty
permutation \(\tau\) contributes \(A(t;x)-1\). Thus
\begin{equation}
    A(t;x)
=
\frac{1}{1-t}
+
\frac{xt}{1-t}\bigl(A(t;x)-1\bigr).
\end{equation}
Solving this equation gives
\begin{equation}\label{gen-A}
    A(t;x)=\frac{1-xt}{1-(1+x)t}.
\end{equation}

Next we compute \(B(t;x)\). A desarrangement in
\(\mathfrak S_n(213,231)\) cannot begin with a nonempty increasing segment
\(12\cdots a\) with \(a\ge1\), since then the first ascent occurs at position
\(1\). Hence, in the decomposition \eqref{equ:decom}, we must have \(a=0\). Thus
every nonempty desarrangement in this class is of the form
\[
\sigma=n\,\tau,\,\text{ with }
\tau\in\mathfrak S_{n-1}(213,231).
\]
Moreover, \(\tau\) is nonempty, since the singleton permutation is not a
desarrangement.

The first ascent of \(n\,\tau\) occurs one position later than the first ascent
of \(\tau\). Thus the parity of the position of the first ascent is reversed.
Consequently, \(n\,\tau\) is a desarrangement if and only if \(\tau\) is not a
desarrangement. Since the initial letter \(n\) creates one new inverse
descent, we obtain
\begin{equation}
    B(t;x)
=
xt\bigl(A(t;x)-1-B(t;x)\bigr).
\end{equation}
Therefore
\begin{equation}\label{gen-B}
    B(t;x)
=
\frac{xt(A(t;x)-1)}{1+xt}.
\end{equation}
Substituting $A(t;x)$ by \eqref{gen-A}, we obtain
\begin{equation}
    B(t;x)=
\frac{xt^2}
{(1+xt)(1-(1+x)t)}.
\end{equation}

We now return to \(P_{T_2}(t;x,y)\). The   contribution of empty permutation and  identity permutations is already
given by \eqref{equ:identity}. For the non-identity part, from the decomposition $\sigma=12\cdots a\,n\,\tau^{^{+}a}, \text{ with }\tau\in\S_{m}(213,231)$,  we see that the initial segment \(12\cdots a\) contributes \((yt)^a\), and summing over all
\(a\ge0\) gives
\begin{equation}
    \sum_{a\ge0}(yt)^a=\frac{1}{1-yt}.
\end{equation}
The letter \(n\) contributes \(t\) and creates one new inverse descent, hence
gives a factor \(xt\). Thus the common contribution is
\begin{equation}
    \frac{xt}{1-yt}.
\end{equation}

It remains to account for the contribution of \(\tau\). All nonempty
\(\tau\)'s contribute \(A(t;x)-1\). However, by \eqref{pix-T_2}, when \(\tau\) is
a desarrangement, the value of \(\operatorname{pix}(\sigma)\) is \(a+1\)
rather than \(a\), so these terms acquire one additional factor \(y\).
Therefore the contribution of the \(\tau\)-part is
\begin{equation}
    A(t;x)-1+(y-1)B(t;x).
\end{equation}
From the above argument, we see that the contribution of non-identity permutations is
\begin{equation}
    \frac{xt}{1-yt}
\left(
A(t;x)-1+(y-1)B(t;x)
\right).
\end{equation}
It follows that 
\begin{equation}
    P_{T_2}(t;x,y)=
1+\frac{yt}{1-yt}
+
\frac{xt}{1-yt}
\left(
A(t;x)-1+(y-1)B(t;x)
\right). 
\end{equation}
Substituting $A(t;x)$ and $B(t;x)$ by \eqref{gen-A} and \eqref{gen-B}, respectively,
we obtain \eqref{equ:P_2}.
\end{proof}

\subsubsection{The class $\S_n(132,321)$}

By the Simion--Schmidt structural description~\cite[Proposition~11]{SS85}, for some $1\le k\le m\le n$, every permutation $\sigma_{k,m}$ in $\S_n(132,321)$ has the form
\begin{equation}\label{form-sigma_n,k}
    \sigma_{k,m}:=(m-k+1)(m-k+2)\cdots m\,12\cdots(m-k)(m+1)\cdots n.
\end{equation}
Note that in the case $m=k=n$, $\sigma_{n,n}$ is the identity permutation.
\begin{theorem}\label{the:132321-F}
    Let $T_3=\{132,321\}$. We have
    \begin{equation}\label{equ:132321-F}
        F_{T_3}(t;x,y)=\frac{1}{1-yt}+\frac{x t^2}{(1-t)^2(1-yt)}.
    \end{equation}
\end{theorem}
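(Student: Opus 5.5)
We compute $F_{T_3}$ directly from the Simion--Schmidt form \eqref{form-sigma_n,k}. First we fix the parametrization. When $k=m$ the word $\sigma_{k,m}$ is the identity for every $m$, so we use the normal form in which $\sigma=12\cdots n$ is counted once. Every non-identity $\sigma\in\S_n(132,321)$ is then $\sigma_{k,m}$ with $1\le k<m\le n$, and distinct pairs give distinct permutations. So we split $F_{T_3}$ into two parts. The empty permutation and the identities contribute $\sum_{n\ge0}(yt)^n=\frac{1}{1-yt}$, since $\des=0$ and $\fix=n$. It remains to show that the non-identity part equals $\frac{xt^2}{(1-t)^2(1-yt)}$.

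Next we read off the statistics for $1\le k<m\le n$. The word $\sigma_{k,m}$ consists of three increasing runs: $(m-k+1)\cdots m$, then $1\cdots(m-k)$, then $(m+1)\cdots n$. The only descent is at position $k$, because $m>1$, while $m-k<m+1$ is an ascent. Hence $\des(\sigma_{k,m})=1$.

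For fixed points, positions $m+1,\dots,n$ are fixed, giving $n-m$ of them. A position $i\le k$ carries $m-k+i\neq i$, since $m>k$. A position $k+j$ with $1\le j\le m-k$ carries $j\neq k+j$. Thus $\fix(\sigma_{k,m})=n-m$.

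Now we sum. The non-identity part is
\[
\sum_{n\ge 2}\sum_{m=2}^{n}\sum_{k=1}^{m-1} x\,y^{n-m}t^n
= x\sum_{m\ge2}(m-1)t^m\sum_{n\ge m}(yt)^{n-m}
=\frac{x}{1-yt}\cdot\frac{t^2}{(1-t)^2},
\]
using $\sum_{m\ge2}(m-1)t^m=t^2/(1-t)^2$. Adding the identity contribution gives \eqref{equ:132321-F}.

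The only delicate point is the bookkeeping of the parametrization. The identity arises for all $k=m$, so we must count it once, and we must check that pairs with $k<m$ are pairwise distinct and avoid double counting. Distinctness holds because $k$ is the unique descent position and $m$ is the value at that position, so $(k,m)$ is recovered from $\sigma_{k,m}$. The fixed-point verification in the first block is equally routine.
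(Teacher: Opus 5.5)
Your proposal is correct and follows essentially the same route as the paper. You split off the identities (giving $1/(1-yt)$), show that each non-identity $\sigma_{k,m}$ with $k<m$ has $\des=1$ and $\fix=n-m$, count the $m-1$ such permutations for each $m$, and sum; your explicit checks of the fixed points in the first two blocks and of the injectivity of $(k,m)\mapsto\sigma_{k,m}$ are details the paper leaves implicit.
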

\begin{proof}
    The identity permutation $\sigma_{n,n}$ contributes $y^n$ to the coefficient of $t^n$ in $F_{T_3}(t;x,y)$. For a non-identity element $\sigma_{k,m}$,
    by \eqref{form-sigma_n,k}, we see that
    the three displayed segments are increasing, i.e., $(m-k+1,\ldots,m)$, $(1,\ldots,m-k)$ and $(m+1,\ldots,n)$, and the only descent occurs between the first and second segments. Thus, we have
\begin{equation}
    \des(\sigma_{k,m})=1.
\end{equation}
The fixed points are precisely those in the final segment $m+1,m+2,\ldots,n$, and hence
\begin{equation}
    \fix(\sigma_{k,m})=n-m.
\end{equation}
For a fixed $m$, there are exactly $m-1$ non-identity permutations, and each permutation contributes $xy^{n-m}$. Thus,
\begin{equation}\label{rec-des-fix-F}
\sum_{\sigma\in\S_n(132,321)}x^{\des(\sigma)}y^{\fix(\sigma)}
=y^n+x\sum_{m=2}^n(m-1)y^{n-m}.
\end{equation}
Together with the contribution \(1\) of the empty permutation, multiplying \eqref{rec-des-fix-F} by $t^n$ and summing over $n\geq 1$ yields \eqref{equ:132321-F}.
\end{proof}
\begin{theorem}\label{the:132321-P}
Let $T_3=\{132,321\}$. We have
\begin{equation}\label{equ:132321-P}
    P_{T_3}(t;x,y)=\frac{1}{1-yt}+\frac{x t^2}{(1-t)^2(1-yt)}.
\end{equation}
\end{theorem}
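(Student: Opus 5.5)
We will use the Simion--Schmidt description \eqref{form-sigma_n,k} again and compute $\ides$ and $\pix$ directly on each $\sigma_{k,m}$, then sum. The identity $\sigma_{n,n}$ has $\ides=0$ and $\pix=n$, so together with the empty permutation it contributes $1/(1-yt)$. This matches the first term of \eqref{equ:132321-P}. It therefore suffices to show that the non-identity permutations of length $n$ contribute
\[
x\sum_{m=2}^n (m-1)y^{n-m},
\]
which is exactly the right-hand side of \eqref{rec-des-fix-F} minus $y^n$. Summing over $n$ then gives $xt^2/((1-t)^2(1-yt))$, as in the proof of Theorem~\ref{the:132321-F}.

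\textbf{Step 1: inverse descents.} Take a non-identity $\sigma_{k,m}$, so $1\le k\le m$ and $k<m$, which means $m-k\ge1$. The values $1,\dots,m-k$ appear in increasing order, and so do $m-k+1,\dots,m$ and $m+1,\dots,n$. The block $m-k+1,\dots,m$ sits to the left of the block $1,\dots,m-k$, and the block $m+1,\dots,n$ comes last. Hence the only $i$ with $i+1$ to the left of $i$ is $i=m-k$. So $\ides(\sigma_{k,m})=1$.

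\textbf{Step 2: pixed points.} The longest increasing prefix of $\sigma_{k,m}$ is $(m-k+1)\cdots m$, of length $k$. The next letter is $1$, which is a descent bottom. After that the word increases. The remainder $\delta$ begins $m$ (the last letter of the first block), then $1,2,\dots$, so its first ascent is at position $2$ whenever $\delta$ has length at least $2$. The pixed factorization takes $\iota$ to be the longest prefix such that the remainder is a desarrangement. Let $\delta'=1\,2\cdots(m-k)(m+1)\cdots n$ be the suffix after the whole first block; its first ascent is at position $1$ if its length is at least $2$. We need to check cases here.
\begin{itemize}[label={}]
\item If $n-k\ge2$, then $\delta'$ is not a desarrangement, while $m\,\delta'$ has first ascent $2$. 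So $\pix=k-1$.
\item If $n-k=1$, then $m=n$, $k=n-1$ and $\delta'=1$. Its first ascent is $1$ by the convention that $i=n$ counts as an ascent, so again $\pix=k-1$.
\end{itemize}
Thus $\pix(\sigma_{k,m})=k-1$ for every non-identity $\sigma_{k,m}$.

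\textbf{Step 3: summation.} The non-identity permutations contribute $x\sum y^{k-1}$ over pairs $1\le k<m\le n$. Reindexing, the number of pairs with $k-1=j$ is $n-j-1$, for $j=0,\dots,n-2$. The fixed-point side gives $\sum_{m=2}^n(m-1)y^{n-m}$; substituting $j=n-m$ makes its coefficient of $y^j$ equal to $n-j-1$ as well. So the two polynomials agree term by term, and \eqref{equ:132321-P} follows.

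The main obstacle is the pixed computation in Step 2. It needs careful handling of the boundary cases: $m-k$ small, an empty last block ($m=n$), and the convention that position $n$ counts as an ascent. The fallback is to verify small $n$ directly, for instance $n=2,3$, to confirm the formula $\pix=k-1$.
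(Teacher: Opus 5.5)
Your proof is correct and takes essentially the same route as the paper. You use the Simion--Schmidt form $\sigma_{k,m}$, show $\ides(\sigma_{k,m})=1$ and $\pix(\sigma_{k,m})=k-1$ for each non-identity element, and sum. You read off $\ides$ directly from the three value blocks rather than via $\sigma_{k,m}^{-1}=\sigma_{m-k,m}$, and you treat the boundary case $n-k=1$ explicitly, which the paper glosses over.

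One wording fix in Step 2: say ``the increasing prefix $\iota$'' rather than ``the longest prefix'', since otherwise the whole word with empty remainder would qualify. Your case check, together with uniqueness of the pixed factorization, already determines $\iota$, so the argument itself is unaffected.
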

\begin{proof}
For the identity permutation $\sigma_{n,n}$,  we have $\ides(\sigma_{n,n})=0$. By \eqref{form-sigma_n,k}, it is easy to see that, for $1\le k< m\le n$
\begin{equation}\label{form-inverse}
    \sigma_{k,m}^{-1}
    = (k+1)(k+2)\cdots m\,12\cdots k\,(m+1)\cdots n
    = \sigma_{m-k,m}.
\end{equation}
It follows that $\ides(\sigma_{k,m})=1$.  The length of the maximal consecutive increasing prefix of \(\sigma_{k,m}\) is equal to \(k\). In order to make the suffix of \(\sigma_{k,m}\) a desarrangement, we need to have the following form
\begin{equation}
    m,1,2,\dots,m-k,m+1,\dots,n.
\end{equation}
Hence,  we have $\pix(\sigma_{k,m})=k-1$. It follows that 
\begin{equation}\label{rec-des-fix-P}
\sum_{\sigma\in\S_n(132,321)}x^{\ides(\sigma)}y^{\pix(\sigma)}
=y^n+\sum_{m=2}^n\sum_{k=1}^{m-1}xy^{k-1}
=y^n+\sum_{j=0}^{n-2}(n-1-j)y^{j}.
\end{equation}
Together with the contribution \(1\) of the empty permutation, multiplying \eqref{rec-des-fix-P} by $t^n$ and summing over $n\geq 1$ yields \eqref{equ:132321-P}.
\end{proof}

\subsection{Triple-avoidance classes in $\mathcal{P}$}\label{sec:3}
In this section, we prove Theorem~\ref{thm:main} for the remaining  three-avoiding classes $\Pi$, namely $\{123,132,312\}$, $\{123,213,231\}$, $\{132,312,321\}$, $\{213,231,321\}$, $\{213,231,312\}$, and $\{123,312,321\}$ by computing their generating functions $F_{\Pi}(t;x,y)$ and $P_{\Pi}(t;x,y)$, respectively.
\subsubsection{The class \texorpdfstring{$\S_n(123,132,312)$}{Sn(123,132,312)}}
Following the Simion-Schmidt description~\cite[Proposition~16]{SS85}, the permutations in $\S_n(123,132,312)$ have the form
\begin{equation}\label{form-trible}
    \hat{\sigma}_{n,k}=(n-1)(n-2)\cdots k\,n\,(k-1)(k-2)\cdots 1, 
    \,\text{ for } 1\le k\le n.
\end{equation}
\begin{theorem}\label{the:123132312-F}
Let $T_4=\{123,132,312\}$. We have 
\begin{align}\label{equ:123132312-F}
F_{T_4}(t;x,y)=1+yt
+t^2\left(\frac{y^2+x}{1-x^2t^2}
+\frac{(1+y)x^2t^2}{(1-x^2t^2)^2}\right)
+t\left(\frac{(1+y)x t^2}{(1-x^2t^2)^2}
+\frac{y x^2t^2}{1-x^2t^2}\right).
\end{align}
\end{theorem}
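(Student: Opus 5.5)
We compute $F_{T_4}$ directly from the explicit description \eqref{form-trible}, as was done for $T_3$ in Theorem~\ref{the:132321-F}. For each $1\le k\le n$ we read off $\des(\hat\sigma_{n,k})$ and $\fix(\hat\sigma_{n,k})$. We then sum over $k$ for fixed $n$, splitting by the parity of $n$, which is what produces the denominators $1-x^2t^2$. The empty permutation contributes $1$ and $n=1$ (the single permutation $1$) contributes $yt$. So we may assume $n\ge 2$.

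\emph{Step 1: descents.} In $\hat\sigma_{n,k}$, positions $1,\dots,n-k$ carry the decreasing values $n-1,\dots,k$. Position $n-k+1$ carries $n$, and positions $n-k+2,\dots,n$ carry $k-1,\dots,1$.
\begin{itemize}
\item For $k=n$ this is $n(n-1)\cdots1$, so $\des=n-1$.
\item For $1\le k\le n-1$ the descents are the $n-k-1$ inside the first block, the one after $n$ (when $k\ge2$), and the $k-2$ inside the last block.
\item In every case with $k\le n-1$ (including $k=1$, where the permutation is $(n-1)\cdots1\,n$) this gives $\des=n-2$.
\end{itemize}

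\emph{Step 2: fixed points.} Position $i\le n-k$ has value $n-i$. It is fixed iff $n=2i$, which requires $n$ even and $k\le n/2$. The letter $n$ at position $n-k+1$ is fixed iff $k=1$. Position $n-k+1+j$ ($1\le j\le k-1$) has value $k-j$. It is fixed iff $2j=2k-n-1$, which requires $n$ odd and $k\ge (n+3)/2$.

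Hence for $n=2s$ even:
\begin{itemize}
\item $k=1$ gives $\fix=2$;
\item $2\le k\le s$ gives $\fix=1$;
\item $s<k\le n$ gives $\fix=0$.
\end{itemize}
For $n=2s+1$ odd:
\begin{itemize}
\item $k=1$ gives $\fix=1$;
\item $k=n$ gives $\fix=1$;
\item $s+2\le k\le n-1$ gives $\fix=1$;
\item $2\le k\le s+1$ gives $\fix=0$.
\end{itemize}
We also double-check the middle position of $n(n-1)\cdots 1$ and the boundary values $k=s+1,s+2$ against small cases such as $n=2,3,4$.

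\emph{Step 3: summation.} This gives
\begin{align*}
\sum_{\sigma\in\S_{2s}(T_4)} x^{\des}y^{\fix} &= x^{2s-1}+x^{2s-2}\bigl(y^2+(s-1)y+(s-1)\bigr),\\
\sum_{\sigma\in\S_{2s+1}(T_4)} x^{\des}y^{\fix} &= x^{2s}y+x^{2s-1}\bigl(y+(s-1)y+s\bigr).
\end{align*}
We multiply by $t^{2s}$ and $t^{2s+1}$ and sum over $s\ge1$, using $\sum_{s\ge1}u^{s-1}=1/(1-u)$ and $\sum_{s\ge1}(s-1)u^{s-1}=u/(1-u)^2$ with $u=x^2t^2$.

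The even part should give the bracket multiplied by $t^2$ in the statement. Its terms $x\,t^2/(1-x^2t^2)$ and $y^2t^2/(1-x^2t^2)$ together give the $(y^2+x)$ term, and the $(s-1)(1+y)$ terms give the squared denominator. The odd part should give the bracket multiplied by $t$. The main obstacle is not conceptual but bookkeeping: pinning down the exact ranges of $k$ in Step~2 at the boundaries, and the degenerate small $n$ where the blocks are empty, so that the coefficient sums match the stated closed form exactly. We verify the final expression against $n\le 5$ by direct listing.
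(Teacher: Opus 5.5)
Your proposal is correct and follows the paper's proof essentially step for step. You use the same reading of $\des$ and $\fix$ from the explicit form $\hat\sigma_{n,k}$, and your boundary $k\ge (n+3)/2$ for odd $n$ coincides with the paper's $k>(n+1)/2$. The parity split yields the same coefficient sums $x^{2s-1}+x^{2s-2}\bigl(y^2+(s-1)(1+y)\bigr)$ and $x^{2s}y+s\,x^{2s-1}(1+y)$, and the same geometric-series summation gives exactly the stated closed form.
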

\begin{proof}
By \eqref{form-trible}, we see that
\begin{equation}
    \sum_{\sigma\in\S_n(T_4)}
x^{\des(\sigma)}
y^{\fix(\sigma)}
=
\sum_{k=1}^{n}
x^{\des(\hat{\sigma}_{n,k})}
y^{\fix(\hat{\sigma}_{n,k})}.
\end{equation}

Let us first determine the number of descents of $\hat{\sigma}_{n,k}$. If $k<n$, then the only ascent occurs at position $n-k$, while all other positions $i\in[n-1]\setminus\{n-k\}$ are descents. If $k=n$, then 
$\hat{\sigma}_{n,n}=n(n-1)\cdots1$, so every position $i\in[n-1]$ is a descent. Therefore,
\begin{equation}\label{case-des}
    \des(\hat{\sigma}_{n,k})=
\begin{cases}
n-2, & \text{if } 1\le k<n;\\[4pt]
n-1, & \text{if } k=n.
\end{cases}
\end{equation}
We next compute the number of fixed points of $\hat{\sigma}_{n,k}$. From~\eqref{form-trible}, the letter $n$ occurs in position $n-k+1$. Hence, it is easy to see that the letter $n$ is a fixed point if and only if $k=1$.
 For $k\le a\le n-1$, the letter $a$ occurs in position $n-a$, and it is a fixed point if and only if $n$ is even and  $a=n/2$. 
 For $1\le a\le k-1$, the letter $a$ occurs in position $n-a+1$, and it is a fixed point if and only if $n$ is odd and  $a=(n+1)/2$. It follows that 
 \begin{equation}\label{fix-chi}
\fix(\hat{\sigma}_{n,k})=\chi(k=1)+\chi(n \text{ is even and } k\le n/2)+\chi(n \text{ is odd and } k> (n+1)/2).
 \end{equation}
Equivalently, if $n=2m+1$, then
\begin{equation}\label{case-fix-2m+1}
\fix(\hat{\sigma}_{2m+1,k})=
\begin{cases}
1, & k=1 \text{ or } m+2\le k\le 2m+1;\\[4pt]
0, & 2\le k\le m+1,
\end{cases}
\end{equation}
while if $n=2m$, then
\begin{equation}\label{case-fix-2m}
\fix(\hat{\sigma}_{2m,k})=
\begin{cases}
2, & k=1;\\[3pt]
1, & 2\le k\le m;\\[3pt]
0, & m+1\le k\le 2m.
\end{cases}
\end{equation}
Hence, for $n=2m+1$, by \eqref{case-fix-2m+1}, we have
\begin{equation}\label{odd-F-rec}
\sum_{\sigma\in\S_{2m+1}(T_4)}
x^{\des(\sigma)}y^{\fix(\sigma)}
=
x^{2m}y+x^{2m-1}m(1+y).
\end{equation}
For $n=2m$, by \eqref{case-fix-2m}, we have
\begin{equation}\label{even-F-rec}
\sum_{\sigma\in\S_{2m}(T_4)}
x^{\des(\sigma)}y^{\fix(\sigma)}
=
x^{2m-1}+x^{2m-2}\bigl((m-1)(1+y)+y^2\bigr).
\end{equation}
Together with the initial contributions \(1\) and \(yt\) corresponding to
\(n=0\) and \(n=1\), respectively, the stated generating function
\eqref{equ:123132312-F} follows by multiplying \eqref{odd-F-rec} and \eqref{even-F-rec} by \(t^{2m+1}\) and \(t^{2m}\), respectively, then summing
over \(m\ge 1\).
\end{proof}

\begin{theorem}\label{the:123132312}
Let $T_4=\{123,132,312\}$. We have 
\begin{equation}\label{equ:123132312-P}
P_{T_4}(t;x,y)=1+yt
+t^2\left(\frac{y^2+x}{1-x^2t^2}
+\frac{(1+y)x^2t^2}{(1-x^2t^2)^2}\right)
+t\left(\frac{(1+y)x t^2}{(1-x^2t^2)^2}
+\frac{y x^2t^2}{1-x^2t^2}\right).
\end{equation}
\end{theorem}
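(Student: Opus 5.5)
The plan is to follow the proof of Theorem~\ref{the:123132312-F} exactly, replacing $(\des,\fix)$ by $(\ides,\pix)$. We compute both statistics on each $\hat\sigma_{n,k}$ from \eqref{form-trible}, and show that for every $n$ the polynomial $\sum_k x^{\ides(\hat\sigma_{n,k})}y^{\pix(\hat\sigma_{n,k})}$ coincides with \eqref{odd-F-rec} for $n=2m+1$ and with \eqref{even-F-rec} for $n=2m$. The cases $n=0,1$ give $1+yt$ directly, so \eqref{equ:123132312-P} then follows from the same summation over $m\ge1$ already carried out for $F_{T_4}$.

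\emph{Step 1: $\ides$.} We use the criterion that $i$ is an inverse descent iff $i+1$ lies to the left of $i$.
\begin{itemize}
\item If $k=n$, then $\hat\sigma_{n,n}$ is decreasing, so $\ides=n-1$.
\item If $k<n$, consider consecutive pairs $(i,i+1)$.
  \begin{itemize}
  \item The block $(n-1)\cdots k$ gives $n-1-k$ inverse descents.
  \item The block $(k-1)\cdots 1$ gives $k-2$ inverse descents when $k\ge2$.
  \item The pair $(k-1,k)$ gives one more when $k\ge2$, since $k$ precedes $k-1$.
  \item The pair $(n-1,n)$ gives none, since $n-1$ is the first letter.
  \end{itemize}
\end{itemize}
This yields $\ides(\hat\sigma_{n,k})=n-2$ for all $1\le k<n$; the case $k=1$ checks directly. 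Hence $\ides$ agrees with \eqref{case-des} for every $k$.

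\emph{Step 2: $\pix$.} This is the only delicate step, because we must identify the unique pixed factorization. We use the fact that $\hat\sigma_{n,k}$ consists of a decreasing run of length $n-k$, then $n$, then a decreasing run.
\begin{itemize}
\item For $k=n$: a decreasing word of length $n$ is a desarrangement iff $n$ is even, so $\pix=0$ if $n$ is even and $\pix=1$ if $n$ is odd.
\item For $1\le k\le n-2$: the longest increasing prefix has length $1$, so the only candidates are $\iota=\varnothing$ and $\iota=n-1$. The first ascent of $\hat\sigma_{n,k}$ is at position $n-k$, so $\pix=0$ if $n-k$ is even and $\pix=1$ if $n-k$ is odd.
\item For $k=n-1$: $\hat\sigma_{n,n-1}=(n-1)\,n\,(n-2)\cdots1$. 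The first ascent is at $1$, so $\iota\neq\varnothing$. Checking the two remaining candidates, $\iota=(n-1)$ and $\iota=(n-1)n$, gives $\pix=2$ if $n$ is even and $\pix=1$ if $n$ is odd.
\end{itemize}

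\emph{Step 3: tally.} For $n=2m$:
\begin{itemize}
\item $k=2m$ contributes $x^{2m-1}$.
\item $k=2m-1$ contributes $x^{2m-2}y^2$.
\item Among $1\le k\le 2m-2$, the $m-1$ even $k$ give $y^0$ and the $m-1$ odd $k$ give $y^1$.
\end{itemize}
The total is $x^{2m-1}+x^{2m-2}\bigl((m-1)(1+y)+y^2\bigr)$, which is \eqref{even-F-rec}.

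For $n=2m+1$:
\begin{itemize}
\item $k=2m+1$ contributes $x^{2m}y$.
\item $k=2m$ contributes $x^{2m-1}y$.
\item Among $1\le k\le 2m-1$, the $m$ odd $k$ give $y^0$ and the $m-1$ even $k$ give $y^1$.
\end{itemize}
The total is $x^{2m}y+x^{2m-1}m(1+y)$, which is \eqref{odd-F-rec}.

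Since the coefficients of $t^n$ in $P_{T_4}$ and $F_{T_4}$ agree for all $n$, Theorem~\ref{the:123132312-F} gives \eqref{equ:123132312-P}. The main obstacle is getting the boundary cases $k=n-1$ and $k=n$ of the pixed factorization right, since these are precisely what produce the $y^2$ and $x^{n-1}$ terms.
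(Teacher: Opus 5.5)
Your proposal is correct and follows the paper's proof. You compute $\ides(\hat\sigma_{n,k})$ and $\pix(\hat\sigma_{n,k})$ case by case, obtain the per-$n$ polynomials \eqref{odd-P-rec} and \eqref{even-P-rec}, which coincide with \eqref{odd-F-rec} and \eqref{even-F-rec}, and then sum over $m$. The only difference is cosmetic: you organize the $\pix$ cases as $k\le n-2$, $k=n-1$, $k=n$ rather than splitting first by the parity of $n$, and this gives the same table of values.
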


\begin{proof}
 First, let us determine $\ides$. From \eqref{form-trible}, a direct check gives
\begin{equation}\label{ides-hat}
\ides(\hat{\sigma}_{n,k})=
\begin{cases}
n-2, & \text{if } 1\le k<n;\\[4pt]
n-1, & \text{if } k=n.
\end{cases}
\end{equation}
Now we compute $\pix$. Recall that $\pix(\sigma)$ is the length of the initial increasing word in the pixed
factorization of $\sigma$. If \(k<n\), then the unique ascent of
\(\hat{\sigma}_{n,k}\) occurs at position \(n-k\). Therefore
\(\hat{\sigma}_{n,k}\) itself is a desarrangement precisely when
\(n-k\) is even. 

Suppose first that \(n=2m+1\). Then \(n-k\) is even exactly when \(k\)
is odd. Thus, for odd \(k<2m+1\), we have
\(\pix(\hat{\sigma}_{2m+1,k})=0\). For even \(k\), deleting the first
letter shifts the unique ascent to an even position, so the remaining
word is a desarrangement and \(\pix(\hat{\sigma}_{2m+1,k})=1\). Finally,
when \(k=2m+1\), deleting the first letter leaves a decreasing word of
even length, which is a desarrangement. Hence
\begin{equation}\label{odd-P-case}
\pix(\hat{\sigma}_{2m+1,k})=
\begin{cases}
0, & k \text{ is odd and } k<2m+1;\\[3pt]
1, & k \text{ is even};\\[3pt]
1,& k=2m+1.
\end{cases}
\end{equation}

Now suppose that \(n=2m\). Then \(n-k\) is even exactly when \(k\) is
even, so \(\pix(\hat{\sigma}_{2m,k})=0\) for even \(k\). If \(k\) is odd
and \(k<2m-1\), deleting the first letter shifts the unique ascent to an
even position, and hence \(\pix(\hat{\sigma}_{2m,k})=1\). In the remaining
case \(k=2m-1\), the first two letters form the increasing word
\((2m-1,2m)\), and deleting them leaves a decreasing word of even length,
which is a desarrangement.
If $n=2m$, then
\begin{equation}\label{even-P-case}
\pix(\hat{\sigma}_{2m,k})=
\begin{cases}
0, & k \text{ is even;}\\[3pt]
1, & k \text{ is odd and } k<2m-1;\\[3pt]
2, & k=2m-1.
\end{cases}
\end{equation}
Therefore, using \eqref{ides-hat} and \eqref{odd-P-case}, for $n=2m+1$,
\begin{equation}\label{odd-P-rec}
\sum_{\sigma\in\S_{2m+1}(T_4)}
x^{\ides(\sigma)}y^{\pix(\sigma)}
=
x^{2m}y+x^{2m-1}m(1+y),
\end{equation}
and using \eqref{ides-hat} and \eqref{even-P-case}, for $n=2m$,
\begin{equation}\label{even-P-rec}
\sum_{\sigma\in\S_{2m}(T_4)}
x^{\ides(\sigma)}y^{\pix(\sigma)}
=
x^{2m-1}+x^{2m-2}\bigl((m-1)(1+y)+y^2\bigr).
\end{equation}
Together with the initial contributions \(1\) and \(yt\) corresponding to
\(n=0\) and \(n=1\), respectively, the stated generating function
\eqref{equ:123132312-P} follows by multiplying \eqref{odd-P-rec} and \eqref{even-P-rec}  by \(t^{2m+1}\) and \(t^{2m}\), respectively,  then summing over \(m\ge 1\).
\end{proof}

\subsubsection{The class $\S_n(123,213,231)$}
Since \(\rho\) is a bijection between 
$\mathfrak S_n(123,132,312)$ and $\mathfrak S_n(123,213,231)$,  applying $\rho$ to \eqref{form-trible} shows that every permutation in
\(\mathfrak S_n(123,213,231)\) has the form
\begin{equation}\label{form-trible-2}
\tilde\sigma_{n,k}
=
n(n-1)\cdots(n-k+2)\,1\,(n-k+1)(n-k)\cdots2,\,\text{ for }
\quad 1\le k\le n.
\end{equation}
\begin{theorem}
    Let $T_5=\{123,213,231\}$. We have
    \begin{equation}
        F_{T_5}(t;x,y)=1+yt
+t^2\left(\frac{y^2+x}{1-x^2t^2}
+\frac{(1+y)x^2t^2}{(1-x^2t^2)^2}\right)
+t\left(\frac{(1+y)x t^2}{(1-x^2t^2)^2}
+\frac{y x^2t^2}{1-x^2t^2}\right).
    \end{equation}
\end{theorem}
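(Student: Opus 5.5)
The plan is to reduce this case to Theorem~\ref{the:123132312-F} via the reverse-complement symmetry, exactly as was done for $T_2$ from $T_1$. I take the map $\rho$ used just above to be $\sigma\mapsto\sigma^{rc}$. The first step is to check that $\rho$ maps $\S_n(123,132,312)$ bijectively onto $\S_n(123,213,231)$.

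Since $\rho$ is an involution and pattern containment is preserved under reverse-complement (a subsequence order-isomorphic to $\tau$ in $\sigma$ becomes one order-isomorphic to $\tau^{rc}$ in $\sigma^{rc}$), it suffices to compute the images of the patterns. We have $123^{rc}=123$, and $132^{rc}=213$, since the reversal of $132$ is $231$ and its complement is $213$. Similarly $312^{rc}=231$, since the reversal is $213$ and its complement is $231$. Hence $\rho$ restricts to a bijection $\S_n(T_4)\to\S_n(T_5)$.

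As a consistency check, applying \eqref{com-re-operator} to $\hat\sigma_{n,k}$ from \eqref{form-trible} works as follows. Reversing gives $1\,2\cdots(k-1)\,n\,k\cdots(n-1)$, and complementing gives $n(n-1)\cdots(n-k+2)\,1\,(n-k+1)\cdots 2$. This is exactly $\tilde\sigma_{n,k}$ in \eqref{form-trible-2}, so $\rho(\hat\sigma_{n,k})=\tilde\sigma_{n,k}$.

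Next I invoke Lemma~\ref{re-com}: $\rho$ preserves both $\des$ and $\fix$. Therefore
\[
\sum_{\sigma\in\S_n(T_5)}x^{\des(\sigma)}y^{\fix(\sigma)}=\sum_{\sigma\in\S_n(T_4)}x^{\des(\sigma^{rc})}y^{\fix(\sigma^{rc})}=\sum_{\sigma\in\S_n(T_4)}x^{\des(\sigma)}y^{\fix(\sigma)}
\]
for every $n\ge 0$. This gives $F_{T_5}(t;x,y)=F_{T_4}(t;x,y)$, and the stated formula follows directly from \eqref{equ:123132312-F}.

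There is essentially no obstacle here. The only point needing care is confirming the pattern images under $\rho$, which is done above.
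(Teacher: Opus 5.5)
Your proposal is correct and follows the paper's proof: both note that $\sigma\mapsto\sigma^{rc}$ restricts to a bijection $\S_n(T_4)\to\S_n(T_5)$, apply Lemma~\ref{re-com} to get $F_{T_5}=F_{T_4}$, and then invoke Theorem~\ref{the:123132312-F}. Your explicit checks fill in details the paper leaves implicit: the pattern images $123^{rc}=123$, $132^{rc}=213$, $312^{rc}=231$, and the consistency check $\hat\sigma_{n,k}^{rc}=\tilde\sigma_{n,k}$.
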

\begin{proof}
By Lemma~\ref{re-com}, the reversal-complement map $\sigma\mapsto\sigma^{rc}$ restricts a bijection between $\S_n(123,213,231)$ and $\S_n(123,132,312)$, thus we have $F_{T_5}(t;x,y)=F_{T_4}(t;x,y)$, where $T_4=\{123,132,312\}$. Then
the stated formula follows immediately from Theorem~\ref{the:123132312-F}.
\end{proof}

\begin{theorem}
     Let $T_5=\{123,213,231\}$. We have
    \begin{align}
        P_{T_5}(t;x,y)=1+yt
+t^2\left(\frac{y^2+x}{1-x^2t^2}
+\frac{(1+y)x^2t^2}{(1-x^2t^2)^2}\right)
+t\left(\frac{(1+y)x t^2}{(1-x^2t^2)^2}
+\frac{y x^2t^2}{1-x^2t^2}\right).
    \end{align}
\end{theorem}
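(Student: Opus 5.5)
The plan is to imitate the proof of Theorem~\ref{the:123132312}: compute $\ides$ and $\pix$ directly on the explicit form \eqref{form-trible-2}, and check that for each $n$ the polynomial $\sum_{\sigma\in\S_n(T_5)}x^{\ides(\sigma)}y^{\pix(\sigma)}$ equals the right-hand sides of \eqref{odd-P-rec} and \eqref{even-P-rec}. Since $F_{T_5}=F_{T_4}$ is already known, and $F_{T_4}$ was assembled from exactly these coefficient polynomials in the proof of Theorem~\ref{the:123132312-F}, the generating-function summation can be reused verbatim.

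\emph{Step 1 ($\ides$).} In $\tilde\sigma_{n,k}$, every pair $(v,v+1)$ whose two values lie in the same decreasing block, either $n,\dots,n-k+2$ or $n-k+1,\dots,2$, is an inverse descent. For $k\ge2$, the pair $(n-k+1,n-k+2)$ is also an inverse descent, since $n-k+2$ lies in the first block. It remains to examine the pair $(1,2)$. When $k<n$, the letter $2$ lies in the last block, to the right of $1$, so $(1,2)$ is not an inverse descent. When $k=n$, the last block is empty and $2$ precedes $1$. Hence $\ides(\tilde\sigma_{n,k})$ equals $n-2$ for $k<n$ and $n-1$ for $k=n$; the case $k=1$, where $\tilde\sigma_{n,1}=1\,n\,(n-1)\cdots2$, also gives $n-2$. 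This agrees with \eqref{ides-hat}.

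\emph{Step 2 ($\pix$).} We use the characterization of desarrangements by the parity of the first ascent, recalling the convention that the last position always counts as an ascent.
\begin{itemize}
\item For $2\le k<n$, the word starts with the decreasing run $n,\dots,n-k+2,1$ and its first ascent is at position $k$. If $k$ is even it is a desarrangement and $\pix=0$. If $k$ is odd, the letter $n$ is followed by the smaller letter $n-1$ (or by $1$), so $\iota$ has length at most one; deleting $n$ moves the first ascent to the even position $k-1$, hence $\pix=1$.
\item For $k=n$, the word is decreasing, so $\pix=0$ if $n$ is even and $\pix=1$ if $n$ is odd.
\item For $k=1$, deleting $1$ leaves the decreasing word $n\cdots2$ of length $n-1$. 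This gives $\pix=1$ if $n$ is odd. If $n$ is even, the prefix $1\,n$ must be removed, leaving a decreasing word of even length $n-2$, so $\pix=2$.
\end{itemize}

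\emph{Step 3 (tabulation).} For $n=2m+1$, the values $k\in\{1,3,\dots,2m-1\}$ give $x^{2m-1}y$, which is $m$ terms. The values $k\in\{2,\dots,2m\}$ even give $x^{2m-1}$, again $m$ terms. Finally $k=2m+1$ gives $x^{2m}y$. This reproduces \eqref{odd-P-rec}. For $n=2m$, the value $k=1$ gives $x^{2m-2}y^2$. The odd values $3\le k\le 2m-1$ give $(m-1)x^{2m-2}y$, and the even values $2\le k\le 2m-2$ give $(m-1)x^{2m-2}$. Finally $k=2m$ gives $x^{2m-1}$. This reproduces \eqref{even-P-rec}. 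Adding the terms $1$ and $yt$ for $n=0,1$ and summing over $m$ exactly as before yields the stated formula.

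The only delicate point is Step 2. Specifically, one must apply the boundary convention correctly for the first ascent of a fully decreasing word (the cases $k=n$ and $k=1$). One must also check that $\iota$ can never absorb more than the first letter when $k\ge2$, because $n$ is immediately followed by a smaller letter.
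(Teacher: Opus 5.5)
Your proposal is correct and follows essentially the same route as the paper. Like the paper, you read off $\ides$ and $\pix$ directly from the explicit form $\tilde\sigma_{n,k}$, tabulate by the parity of $n$ to obtain the coefficient polynomials \eqref{odd-t_5} and \eqref{even-t_5} (identical to those for $T_4$), and sum over $m$. Your handling of $k=1$ is actually cleaner than the paper's; the one thing to watch in Step 1 is that for $k=n$ the pair $(n-k+1,n-k+2)$ is the pair $(1,2)$ itself, so it must be counted only once to get $\ides=n-1$.
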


\begin{proof}

It is straightforward to see that 
\begin{equation}\label{case-ides-P}
    \ides(\tilde\sigma_{n,k})=
\begin{cases}
n-2, & 1\le k<n;\\[4pt]
n-1, & k=n.
\end{cases}
\end{equation}

Next we compute \(\pix(\tilde\sigma_{n,k})\). From the explicit form
of \(\tilde\sigma_{n,k}\), if \(k<n\), the unique ascent occurs at position
\(k\), namely the adjacent pair $1<n-k+1$.
Thus \(\tilde\sigma_{n,k}\) is itself a desarrangement exactly when \(k\)
is even. If \(k\) is odd and \(k<n\), deleting the first letter shifts this
unique ascent to position \(k-1\), which is even.

Now suppose \(n=2m+1\). If \(k\) is even, then
\(\tilde\sigma_{2m+1,k}\) is already a desarrangement, so its pix is \(0\).
If \(k\) is odd and \(k<2m+1\), deleting the first letter gives a
desarrangement, so the pix is \(1\). Finally, when \(k=2m+1\), the word
\(\tilde\sigma_{2m+1,2m+1}=(2m+1)(2m)\cdots1\) is decreasing of odd length;
deleting the first letter leaves a decreasing word of even length, which is
a desarrangement. Hence
\begin{equation}\label{case-pix-odd}
    \pix(\tilde\sigma_{2m+1,k})=
\begin{cases}
0, & k \text{ is even};\\[4pt]
1, & k \text{ is odd}.
\end{cases}
\end{equation}

Next suppose \(n=2m\). If \(k\) is even, then
\(\tilde\sigma_{2m,k}\) is already a desarrangement, so its pix is \(0\).
If \(k\) is odd and \(3\le k\le 2m-1\), deleting the first letter shifts
the unique ascent to the even position \(k-1\), so the pix is \(1\). The
remaining case is \(k=1\). In this case
$\tilde\sigma_{2m,1}=1~ 2m~ 2m-1 ~\ldots~2$
and the initial increasing word has length \(2\), since deleting the first
two letters leaves the decreasing word \((2m-1)\cdots2\) of even length,
which is a desarrangement. Therefore
\begin{equation}\label{case-pix-even}
    \pix(\tilde\sigma_{2m,k})=
\begin{cases}
0, & k \text{ is even};\\[3pt]
1, & k \text{ is odd and } 3\le k\le 2m-1;\\[3pt]
2, & k=1.
\end{cases}
\end{equation}

Combining \eqref{case-ides-P} and \eqref{case-pix-odd}, for \(n=2m+1\), we get
\begin{equation}\label{odd-t_5}
    \sum_{\sigma\in\mathfrak S_{2m+1}(T_5)}
x^{\ides(\sigma)}y^{\pix(\sigma)}
=
x^{2m}y+mx^{2m-1}(1+y).
\end{equation}
Similarly, combining \eqref{case-ides-P} and \eqref{case-pix-even}, for \(n=2m\), we get
\begin{equation}\label{even-t_5}
    \sum_{\sigma\in\mathfrak S_{2m}(T_5)}
x^{\ides(\sigma)}y^{\pix(\sigma)}
=
x^{2m-1}
+x^{2m-2}\bigl((m-1)(1+y)+y^2\bigr).
\end{equation}

Together with the initial contributions \(1\) and \(yt\) corresponding to
\(n=0\) and \(n=1\), respectively, the stated generating function follows
by multiplying \eqref{odd-t_5} and \eqref{even-t_5} by \(t^{2m+1}\) and \(t^{2m}\),
respectively, and summing over \(m\ge1\).
\end{proof}

\subsubsection{The class $\S_n(132,312,321)$} We claim that every permutation in
$\S_n(132,312,321)$ has the following form
\begin{equation}\label{form-sigma-d}
    \bar{\sigma}_{n,k}=2\,3\,\cdots k\,1\,(k+1)\,(k+2)\ldots n,\,
    \text{ for } 1\leq k\leq n.
\end{equation}
Here, the case \(k=1\) gives the identity permutation \(\bar{\sigma}_{n,1}=12\cdots n\).

Indeed, let \(\sigma\in\S_n(132,312,321)\), and let \(j=\sigma^{-1}(1)\) be the
position of \(1\). 
Since $\sigma$ avoids $321$,there are no descents among the letters lying to the left of $1$. Hence, the subword to the left of $1$ is increasing. Similarly, since \(\sigma\) avoids \(132\), the subword to the right of $1$ is also increasing. 

It remains to determine which letters occur on each side of $1$. Suppose that there  exist letters \(a\) to the left of \(1\) and \(b\) to the right of \(1\) with \(a>b\). Then the three letters \(a\,1\,b\), in this order, forms a \(312\)-pattern, contradicting the avoiding condition. Therefore, every letter to the left of \(1\) is smaller than every letter
to the right of \(1\). It follows that the letters to the left of \(1\)
are precisely \(2,3,\ldots,k\) for some \(k\), and the letters to the
right of \(1\) are \(k+1,k+2,\ldots,n\), that is \eqref{form-sigma-d}.

\begin{theorem}
    Let $T_6=\{132,312,321\}$. We have
\begin{equation}\label{equ:F-T_6}
    F_{T_6}(t;x,y)=1+\frac{yt}{1-yt}
+
\frac{x t^2}{(1-t)(1-yt)}.
\end{equation}
\end{theorem}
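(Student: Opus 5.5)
We use the explicit description \eqref{form-sigma-d}: every $\sigma\in\S_n(132,312,321)$ with $n\ge1$ equals $\bar\sigma_{n,k}$ for a unique $1\le k\le n$. So the plan is to compute $\des$ and $\fix$ of each $\bar\sigma_{n,k}$ and then sum.

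First the descents. For $k=1$, $\bar\sigma_{n,1}$ is the identity, so $\des=0$. For $k\ge2$ the word $2\,3\cdots k\,1\,(k+1)\cdots n$ consists of two increasing runs $2\cdots k$ and $1\,(k+1)\cdots n$. The only descent is at position $k-1$, where $k>1$. Hence $\des(\bar\sigma_{n,k})=1$ for $2\le k\le n$.

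Next the fixed points. For $k=1$ all $n$ points are fixed. For $k\ge 2$, positions $1,\dots,k-1$ carry the values $2,\dots,k$, so $\sigma(i)=i+1\neq i$. Position $k$ carries $1\neq k$. Positions $k+1,\dots,n$ carry $k+1,\dots,n$, so all of these are fixed. Hence $\fix(\bar\sigma_{n,k})=n-k$.

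Summing over $k$ gives, for $n\ge1$,
\[
\sum_{\sigma\in\S_n(T_6)}x^{\des(\sigma)}y^{\fix(\sigma)}=y^n+x\sum_{k=2}^{n}y^{n-k}.
\]
Multiplying by $t^n$ and adding the empty permutation, the identity terms give $1+\frac{yt}{1-yt}$. The remaining double sum $\sum_{n\ge2}\sum_{k=2}^n x\,y^{n-k}t^n$ factors, after setting $j=n-k\ge0$, as
\[
x\sum_{k\ge2}t^k\sum_{j\ge0}(yt)^j=\frac{xt^2}{(1-t)(1-yt)}.
\]
This is exactly \eqref{equ:F-T_6}. The only step needing care is checking the fixed-point count in the boundary case $k=n$, where there is no final run and $\fix=0$; this is consistent with $n-k=0$. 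There is no real obstacle.
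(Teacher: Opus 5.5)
Your proof is correct and follows essentially the same route as the paper: you use the characterization \eqref{form-sigma-d}, record $\des=0$, $\fix=n$ for the identity and $\des(\bar\sigma_{n,k})=1$, $\fix(\bar\sigma_{n,k})=n-k$ for $2\le k\le n$, and sum to get $y^n+x\sum_{k=2}^{n}y^{n-k}$ before passing to generating functions. Your substitution $j=n-k$ writes out the final summation that the paper only describes in words.
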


\begin{proof}
From the characterization \eqref{form-sigma-d}, for \(k=1\), we have \(\bar{\sigma}_{n,1}=12\cdots n\). Hence
\begin{equation}\label{des-fix-d}
    \des(\bar{\sigma}_{n,1})=0
\text{ and }
\fix(\bar{\sigma}_{n,1})=n.
\end{equation}
For \(2\leq k\leq n\), it is straightforward to see that the permutation \(\bar{\sigma}_{n,k}\) has exactly one descent, and its fixed points are exactly $k+1,k+2,\ldots,n$, i.e., 
\begin{equation}\label{des-fix-d-gen}
\des(\bar{\sigma}_{n,k})=1\text{ and }\fix(\bar{\sigma}_{n,k})=n-k.
\end{equation}
Therefore, combining \eqref{des-fix-d} and \eqref{des-fix-d-gen}, we obtain
\begin{equation}
    \sum_{\sigma\in\S_n(T_6)} x^{\des(\sigma)}y^{\fix(\sigma)}
=
y^n+\sum_{k=2}^n x y^{n-k}.
\end{equation}
Consequently, after multiplying by \(t^n\) and summing over \(n\geq 1\),
and after adding the initial contribution \(1\) of the empty permutation
corresponding to \(n=0\), we obtain \eqref{equ:F-T_6}.
\end{proof}

\begin{theorem}
    Let $T_6=\{132,312,321\}$. We have
\begin{equation}\label{equ:P-T_6}
    P_{T_6}(t;x,y)=1+\frac{yt}{1-yt}
+
\frac{x t^2}{(1-t)(1-yt)}.
\end{equation}
\end{theorem}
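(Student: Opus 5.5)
We compute $\ides$ and $\pix$ directly on the explicit forms $\bar{\sigma}_{n,k}=2\,3\cdots k\,1\,(k+1)\cdots n$ of \eqref{form-sigma-d}. The goal is to show that for each $n\ge 1$,
\[
\sum_{\sigma\in\S_n(T_6)}x^{\ides(\sigma)}y^{\pix(\sigma)}=y^n+\sum_{k=2}^n x\,y^{n-k}.
\]
This is the same polynomial obtained for $(\des,\fix)$ in the proof of the preceding theorem. Summing over $n$ with the empty permutation then gives \eqref{equ:P-T_6} by the same geometric series computation.

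First, $\ides$. For $k=1$ we have the identity, so $\ides=0$ and $\pix=n$. For $2\le k\le n$, the letter $1$ sits to the right of $2$, so $1$ is an inverse descent. For every other $v\in[n-1]$, the letters $v$ and $v+1$ appear in increasing left-to-right order: this holds among $2,\ldots,k$, for the pair $(k,k+1)$, and among $k+1,\ldots,n$. Hence $\ides(\bar{\sigma}_{n,k})=1$. Alternatively, $\bar{\sigma}_{n,k}^{-1}=k\,1\,2\cdots(k-1)\,(k+1)\cdots n$ has exactly one descent.

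Next, $\pix$. For $k\ge 2$, the maximal increasing prefix of $\bar{\sigma}_{n,k}$ is $2\,3\cdots k$, of length $k-1$. This is followed by $1\,(k+1)\cdots n$, whose first ascent is at position $1$ (or the word is just $1$ when $k=n$). The pixed factorization requires the suffix $\delta$ to begin with a descent run of even length. The natural choice is therefore $\iota=2\cdots(k-1)$ and $\delta=k\,1\,(k+1)\cdots n$, whose first ascent is at position $2$. This gives $\pix(\bar{\sigma}_{n,k})=k-2$. The map $k\mapsto k-2$ sends $\{2,\ldots,n\}$ bijectively onto $\{0,\ldots,n-2\}$, as does $k\mapsto n-k$. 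So the multisets of $\pix$ values and of $\fix$ values over $k\ge 2$ coincide, and
\[
\sum_{k=2}^n x\,y^{k-2}=\sum_{k=2}^n x\,y^{n-k}.
\]
This yields the displayed polynomial identity.

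The only delicate point is the $\pix$ computation. One must confirm that the pixed factorization is indeed determined as above: $\iota$ is the increasing prefix for which the remainder is a desarrangement, and by uniqueness of the factorization it suffices to exhibit one valid split. We must also check the edge cases. For $k=2$ we get $\iota=\varnothing$ and $\delta=\sigma$, and $\sigma=2\,1\,3\cdots n$ has its first ascent at position $2$, so it is a desarrangement. For $k=n$ we get $\delta=n\,1$, whose first ascent is at position $2$ by the convention that $n$ counts as an ascent. Both give the claimed value $k-2$, with no exceptions.
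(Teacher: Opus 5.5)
Your proof is correct and follows essentially the same route as the paper. Both read off $\ides(\bar{\sigma}_{n,k})=1$ and $\pix(\bar{\sigma}_{n,k})=k-2$ for $k\ge 2$ from the explicit form of $\bar{\sigma}_{n,k}$, using the factorization $\iota=2\cdots(k-1)$, $\delta=k\,1\,(k+1)\cdots n$, and then sum the geometric series. Your version also spells out steps the paper leaves implicit: the check that $\ides=1$, the uniqueness argument for the factorization with the edge cases $k=2$ and $k=n$, and the reindexing $\sum_{k=2}^n xy^{k-2}=\sum_{k=2}^n xy^{n-k}$.
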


\begin{proof}
 From the characterization \eqref{form-sigma-d}, for \(k=1\), we have \(\bar{\sigma}_{n,1}=12\cdots n\). Hence
\begin{equation}\label{des-fix-T_6}
    \ides(\bar{\sigma}_{n,1})=0
\text{ and }
\pix(\bar{\sigma}_{n,1})=n.
\end{equation}
It remains to determine \(\pix(\bar{\sigma}_{n,r})\). Recall that \(\pix(\bar{\sigma})\)
is the length of the increasing prefix in the pixed factorization of
\(\bar{\sigma}\). For \(2\leq k\leq n\), the maximal increasing prefix of
\(\bar{\sigma}_{n,k}\) which occurs before the desarrangement is $23\cdots (k-1)$, that is,
\[
\bar{\sigma}_{n,k}
=
\underbrace{2\cdots(k-1)}_{\iota}\,
\underbrace{k1(k+1)\cdots(n-1)}_{\delta}.
\]
Thus, we have
\begin{equation}\label{pix-T_6}
    \pix(\bar{\sigma}_{n,k})=k-2.
\end{equation}
Therefore, combining \eqref{des-fix-T_6} and \eqref{pix-T_6},  we have
\begin{equation}
    \sum_{\sigma\in\S_n(T_6)} x^{\ides(\sigma)}y^{\pix(\sigma)}
=
y^n+\sum_{k=2}^n x y^{n-k}.
\end{equation}
Consequently, after multiplying by \(t^n\) and summing over \(n\geq 1\),
and after adding the initial contribution \(1\) of the empty permutation
corresponding to \(n=0\), we obtain \eqref{equ:P-T_6}.
\end{proof}

\subsubsection{The class $\S_n(213,231,321)$}
Since $\rho$ is a bijection between $\S_n(132,312,321)$ and $\S_n(213,231,321)$, by the form of permutations in $\S_n(132,312,321)$ (see \eqref{form-sigma-d}), we derive that every permutation in $\S_n(213,231,321)$ has the following form 
\begin{equation}\label{form-T_7}
    ^\star{\sigma}_{n,k}:=
1\,2\,\cdots\,(k-1)\,n\,k\,(k+1)\,\cdots\,(n-1),\,\text{ for } 1\le k\le n.
\end{equation}

\begin{theorem}
    Let $T_7=\{213,231,321\}$. We have
    \begin{equation}\label{equ:F-T_7}
        F_{T_7}(t;x,y)=1+\frac{yt}{1-yt}+\frac{x t^2}{(1-t)(1-yt)}.
    \end{equation}
\end{theorem}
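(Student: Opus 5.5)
The quickest route is the one already used for $T_2$ and $T_5$: reduce to a class handled earlier via Lemma~\ref{re-com}. The reverse-complement map sends a pattern $\tau$ to $\tau^{rc}$. We have $132^{rc}=213$ and $312^{rc}=231$, while $321^{rc}=321$. Hence $\sigma\mapsto\sigma^{rc}$ restricts to a bijection from $\S_n(132,312,321)$ onto $\S_n(213,231,321)$.

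By Lemma~\ref{re-com}, this bijection preserves both $\des$ and $\fix$. It follows that $F_{T_7}(t;x,y)=F_{T_6}(t;x,y)$, and \eqref{equ:F-T_7} is then immediate from \eqref{equ:F-T_6}.

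As an independent check, or as a self-contained proof, I would also compute directly from the form \eqref{form-T_7}. For $k=n$, the permutation ${}^\star\sigma_{n,n}$ is the identity, which contributes $y^nt^n$; summing over $n$ gives $\frac{yt}{1-yt}$.

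For $1\le k\le n-1$, the word $1\cdots(k-1)\,n\,k\cdots(n-1)$ has exactly one descent, at position $k$ (the pair $n>k$). Its fixed points are exactly $1,\dots,k-1$, since every later letter sits one position to the right of its value. So it contributes $x y^{k-1}$. Summing over $1\le k\le n-1$ and then over $n\ge2$ gives
\[
\sum_{n\ge2}\sum_{j=0}^{n-2}x y^{j}t^n=\frac{xt^2}{(1-t)(1-yt)}.
\]
Adding the term $1$ for the empty permutation yields \eqref{equ:F-T_7}.

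There is no real obstacle here. The only points needing care are confirming that each pattern in $T_6$ maps under $rc$ to a pattern in $T_7$ (and conversely), and the boundary cases $k=1$ and $k=n$ in the direct count.
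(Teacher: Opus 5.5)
Your proposal is correct and matches the paper's proof. The reverse-complement map carries $\{132,312,321\}$ onto $\{213,231,321\}$, so Lemma~\ref{re-com} gives $F_{T_7}=F_{T_6}$, and the formula then follows from the $T_6$ case. Your direct count from the explicit form ${}^\star\sigma_{n,k}$ (one descent and $k-1$ fixed points for $k<n$) is also correct, and it is an independent check that the paper does not carry out.
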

\begin{proof}
    By Lemma~\ref{re-com}, the reversal-complement map $\sigma\mapsto\sigma^{rc}$ restricts a bijection between $\S_n(213,231,321)$ and $\S_n(132,312,321)$, thus we have $F_{T_7}(t;x,y)=F_{T_6}(t;x,y)$, where $T_6=\{132,312,321\}$. By the formula for $F_{T_6}(t;x,y)$ given in \eqref{equ:F-T_6},  we obtain \eqref{equ:F-T_7}.
\end{proof}

\begin{theorem}
    Let $T_7=\{213,231,321\}$. We have
    \begin{equation}\label{equ:P-T_7}
        P_{T_7}(t;x,y)=1+\frac{yt}{1-yt}+\frac{x t^2}{(1-t)(1-yt)}.
    \end{equation}
\end{theorem}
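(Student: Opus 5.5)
We compute $\ides$ and $\pix$ directly on the explicit family \eqref{form-T_7}, ${}^\star\sigma_{n,k}=1\,2\cdots(k-1)\,n\,k\,(k+1)\cdots(n-1)$ for $1\le k\le n$. The goal is to show that the row polynomial $\sum_{\sigma\in\S_n(T_7)}x^{\ides(\sigma)}y^{\pix(\sigma)}$ equals $y^n+\sum_{k=2}^n xy^{n-k}$, exactly as for $(\des,\fix)$ over $\S_n(T_6)$. The generating function \eqref{equ:P-T_7} then follows by the same summation as in the proof of \eqref{equ:F-T_6}.

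First, $k=n$ gives the identity permutation $12\cdots n$, with $\ides=0$ and $\pix=n$; this contributes $y^n$.

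For $1\le k\le n-1$, the letter $n$ sits at position $k$, to the left of $n-1$ (which is the last letter). The letters $1,\dots,n-1$ otherwise appear in increasing order. Hence the only inverse descent is $n-1$, and $\ides({}^\star\sigma_{n,k})=1$.

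For $\pix$, note that the unique descent of ${}^\star\sigma_{n,k}$ (for $k<n$) is at position $k$, between $n$ and $k$. A suffix starting at position $j\le k$ is a word whose first ascent is at position $k-j+2$, or at its end when $k=n-1$. For $k\le n-2$, the letter after $k$ exists and is larger, so the first ascent of the suffix beginning at position $j$ is at $k-j+2$. This is even iff $j\equiv k\pmod 2$, and the prefix $1\cdots(j-1)$ is increasing. The pixed factorization takes the longest increasing prefix $\iota$ such that the rest is a desarrangement.

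The increasing prefixes have length at most $k$, since positions $1,\dots,k$ are increasing. Of the candidate lengths $j-1\in\{k-1,k\}$ (suffix $n\,k\cdots$ or $k\cdots$), the length $k$ fails: the suffix $k(k+1)\cdots$ is increasing and nonempty, with first ascent at $1$. The length $k-1$ gives the suffix $n\,k\,(k+1)\cdots(n-1)$, whose first ascent is at position $2$, so it is a desarrangement. Thus $\pix=k-1$ for $1\le k\le n-2$.

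The edge case $k=n-1$ needs separate care. Here the suffix $n(n-1)$ has length $2$, and its first ascent is at position $2$ by the convention that position $n$ counts as an ascent. So it is a desarrangement, and $\pix=n-2=k-1$ again.

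Hence for $k=1,\dots,n-1$ the contributions are $xy^{k-1}$. Reindexing by $k'=n-k+1\in\{2,\dots,n\}$ would give $xy^{n-k'}$; more simply, $\{k-1:1\le k\le n-1\}=\{n-k':2\le k'\le n\}$. So the row polynomial is $y^n+\sum_{k=2}^n xy^{n-k}$, matching the $T_6$ computation, and summing with $t^n$ and adding $1$ for $n=0$ gives \eqref{equ:P-T_7}.

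The only delicate step is pinning down $\pix$ exactly via the convention that position $n$ counts as an ascent. In particular, the boundary cases $k=n-1$ and $k=1$ (where $\iota$ is empty and $\sigma=n\,1\,2\cdots(n-1)$ is itself a desarrangement) must be checked explicitly.
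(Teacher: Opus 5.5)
Your proof is correct and follows the paper's route. On the explicit family ${}^\star\sigma_{n,k}$ you get $\ides=1$ and the pixed factorization $\iota=12\cdots(k-1)$, $\delta=n\,k\,(k+1)\cdots(n-1)$ for $k<n$, hence $\pix=k-1$, the row polynomial $y^n+\sum_{k=1}^{n-1}xy^{k-1}$, and the stated generating function.

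One slip to fix, though it is not load-bearing: for $j<k$ the suffix starting at position $j$ begins with the ascent $j\,(j+1)$ (or $(k-1)\,n$ when $j=k-1$). So its first ascent is at position $1$, not $k-j+2$, and the parity claim ``$j\equiv k$'' is false. You do not need it: the pixed factorization is unique, so exhibiting the single valid factorization with $|\iota|=k-1$ already determines $\pix$.
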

\begin{proof}
 From the characterization \eqref{form-T_7}, for \(k=n\), we have \(^\star{\sigma}_{n,n}=12\cdots n\). Hence,
\begin{equation}\label{T_7-special-case}
    \ides(^\star{\sigma}_{n,n})=0 
\text{ and }
\pix(^\star{\sigma}_{n,n})=n.
\end{equation}
For \(1\leq k\leq n-1\), one has
\begin{equation}\label{ides-T_7}
    \ides(^\star{\sigma}_{n,k})=1
\end{equation}

Recall that \(\pix(\pi)\) is the length of the initial increasing factor
in the pixed factorization of \(\pi\). For \(1\le k\le n-1\), we may
write
\[
{}^\star\sigma_{n,k}
=
\underbrace{12\cdots(k-1)}_{\iota}\,
\underbrace{nk(k+1)\cdots(n-1)}_{\delta}.
\]
The word \(\iota=12\cdots(k-1)\) is increasing. Moreover, the suffix
\(\delta=nk(k+1)\cdots(n-1)\) is a desarrangement: if \(k\le n-2\), then
its first ascent occurs at the second position, while if \(k=n-1\), then
\(\delta=n(n-1)\) has no ascent and is a desarrangement by convention.
Thus the above decomposition is precisely the pixed factorization of
\({}^\star\sigma_{n,k}\). Consequently,
\begin{equation}\label{pix-T_7}
    \operatorname{pix}({}^\star\sigma_{n,k})=k-1.
\end{equation}

Combining \eqref{T_7-special-case}, \eqref{ides-T_7} and \eqref{pix-T_7}, we obtain, for \(n\ge1\),
\begin{equation}
    \sum_{\sigma\in\mathfrak{S}_n(T_7)}
x^{\operatorname{ides}(\sigma)}
y^{\operatorname{pix}(\sigma)}
=
y^n+\sum_{k=1}^{n-1}xy^{k-1}.
\end{equation}
Consequently, after multiplying by \(t^n\) and summing over \(n\geq 1\),
and after adding the initial contribution \(1\) of the empty permutation
corresponding to \(n=0\), we obtain \eqref{equ:P-T_7}.
\end{proof}

\subsubsection{The class $\S_n(213,231,312)$}
We claim that every permutation in \(\mathfrak{S}_n(213,231,312)\) is of the form
\begin{equation}\label{form-sigma_nk}
    \mathring{\sigma}_{n,k}=12\cdots k\, n\, (n-1)\cdots(k+1),\,\text{ for } 0\le k\le n-1.
\end{equation}

Indeed, let \(\sigma\in\mathfrak{S}_n(213,231,312)\), and consider the
position of the letter \(1\). Since \(1\) is the smallest letter, if there
were letters on both sides of \(1\), say \(a\) to its left and \(b\) to its
right, then the subsequence \(a 1 b\) would form a \(213\)-pattern if
\(a<b\), and a \(312\)-pattern if \(a>b\). Thus the letter \(1\) must occur
at one of the two ends of \(\sigma\).

If \(1\) is the last letter, then the subword preceding it must be decreasing.
Indeed, any increasing pair \(a<b\) occurring before \(1\) would give a
\(231\)-pattern \(a b 1\). Hence, in this case,
\[
\sigma=n(n-1)\cdots 21,
\]
which corresponds to \(k=0\) in \eqref{form-sigma_nk}.

Otherwise, \(1\) must be the first letter. Deleting this initial \(1\) and
standardizing the remaining word, we obtain a permutation of
\(\mathfrak{S}_{n-1}(213,231,312)\). Repeating the same argument, we find
that the letters \(1,2,\ldots,k\) must appear successively at the beginning
of \(\sigma\), until the first step at which the smallest remaining letter
is placed at the right end. At that point, the remaining letters must appear
in decreasing order. Consequently, for some \(0\le k\le n-1\),
\[
\sigma=\mathring{\sigma}_{n,k}=12\cdots k\, n\, (n-1)\cdots(k+1),
\]
as claimed.
\begin{theorem}
    Let $T_8=\S_n(213,231,312)$. We have
\begin{equation}\label{equ:T_8}
    F_{T_8}(t;x,y)=1+\frac{yt+xt^2}{(1-yt)(1-x^2t^2)}.
\end{equation}
\end{theorem}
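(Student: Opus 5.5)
We use the explicit description \eqref{form-sigma_nk}: every $\sigma\in\S_n(213,231,312)$ with $n\ge1$ is $\mathring{\sigma}_{n,k}=12\cdots k\,n\,(n-1)\cdots(k+1)$ for a unique $0\le k\le n-1$. So the coefficient of $t^n$ in $F_{T_8}$ is the sum of $x^{\des}y^{\fix}$ over these $n$ permutations, and we compute both statistics for each $k$ directly.

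\textbf{Descents.} The prefix $12\cdots k\,n$ is increasing, and the suffix $n(n-1)\cdots(k+1)$ is decreasing of length $n-k$. Hence
\[
\des(\mathring{\sigma}_{n,k})=n-k-1 .
\]

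\textbf{Fixed points.} The letters $1,\dots,k$ are fixed. On positions $k+1,\dots,n$, the letter in position $k+i$ is $n+1-i$ for $1\le i\le n-k$. This is fixed exactly when $2i=n+1-k$, i.e.\ when $n-k$ is odd, and then there is exactly one such $i$. Therefore
\[
\fix(\mathring{\sigma}_{n,k})=k+\chi(n-k \text{ odd}).
\]

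\textbf{Summing.} Write $j=n-k\ge1$. Each permutation contributes $t^n x^{\des}y^{\fix}=(yt)^k\,t^j x^{j-1}y^{\chi(j\text{ odd})}$. The sum over $k\ge0$ and $j\ge1$ therefore factors as
\[
\frac{1}{1-yt}\sum_{j\ge1}x^{j-1}y^{\chi(j\text{ odd})}t^j .
\]
Split the inner sum by the parity of $j$:
\begin{itemize}
\item odd $j=2i+1$ gives $y\,t\sum_{i\ge0}(xt)^{2i}=\dfrac{yt}{1-x^2t^2}$;
\item even $j=2i+2$ gives $xt^2\sum_{i\ge0}(xt)^{2i}=\dfrac{xt^2}{1-x^2t^2}$.
\end{itemize}
Adding these, multiplying by $1/(1-yt)$, and including the empty permutation gives
\[
F_{T_8}(t;x,y)=1+\frac{yt+xt^2}{(1-yt)(1-x^2t^2)},
\]
which is \eqref{equ:T_8}.

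The only delicate step is the parity count of fixed points in the decreasing tail; it can be checked on $n=2,3$ (for instance, $\mathring{\sigma}_{3,0}=321$ has one fixed point). The rest is a routine geometric series.
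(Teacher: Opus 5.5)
Your proposal is correct and follows the paper's proof essentially step for step. It uses the same explicit form $\mathring{\sigma}_{n,k}$, the same values $\des=n-k-1$ and $\fix=k+\chi(n-k\text{ odd})$, the same substitution $j=n-k$ to factor out $1/(1-yt)$, and the same parity split of the remaining sum.
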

\begin{proof}  
From \eqref{form-sigma_nk}, the initial segment \(12\cdots k\) is increasing,
whereas the final segment
$n(n-1)\cdots(k+1)$
is decreasing and has length \(n-k\). Hence the descents occur precisely
inside this final decreasing segment. Therefore
\begin{equation}\label{des-T_8}
\des(\mathring{\sigma}_{n,k})=n-k-1.
\end{equation}
It remains to determine the number of fixed points. The letters
\(1,2,\ldots,k\) are fixed points, and hence contribute \(k\) fixed points.
Now consider the final decreasing segment. Its positions are
$k+1,k+2,\ldots,n,$ and its corresponding values are
$n,n-1,\ldots,k+1.$
The \(j\)-th entry of this segment, where \(1\le j\le n-k\), lies in position
\(k+j\) and has value \(n-j+1\). It is fixed if and only if
$k+j=n-j+1,$ or equivalently, $2j=n-k+1.$
Thus the final decreasing segment contributes exactly one additional fixed
point when \(n-k\) is odd. Consequently,
\begin{equation}\label{fix-T_8}
    \operatorname{fix}(\mathring{\sigma}_{n,k})
=
k+\chi(n-k\ \text{is odd}).
\end{equation}

Combining \eqref{des-T_8} and \eqref{fix-T_8}, we obtain, for \(n\ge1\),
\begin{equation}\label{rec-T_8}
    \sum_{\sigma\in\mathfrak{S}_n(T_8)}
x^{\operatorname{des}(\sigma)}
y^{\operatorname{fix}(\sigma)}
=
\sum_{k=0}^{n-1}
x^{n-k-1}
y^{k+\chi(n-k\ \text{is odd})}.
\end{equation}
Taking also into account the empty permutation for \(n=0\), which contributes
\(1\), and multiplying both side of \eqref{rec-T_8} by  $t^n$ and summing over $n\ge 1$, we get
\[
\begin{aligned}
F_{T_8}(t;x,y)
&=
1+\sum_{n\ge1}t^n
\sum_{\sigma\in\mathfrak{S}_n(T_8)}
x^{\operatorname{des}(\sigma)}
y^{\operatorname{fix}(\sigma)}  \\
&=
1+\sum_{n\ge1}\sum_{k=0}^{n-1}
t^n x^{n-k-1}
y^{k+\chi(n-k\ \text{is odd})}.
\end{aligned}
\]
Setting \(\ell=n-k\), the length of the final decreasing segment in the above identity. Then for
\(\ell\ge1\), \(k\ge0\) and \(n=k+\ell\), we have 
\begin{align}\label{F-T_8}
F_{T_8}(t;x,y)
&=
1+\sum_{k\ge0}\sum_{\ell\ge1}
t^{k+\ell}x^{\ell-1}
y^{k+\chi(\ell\ \text{is odd})} \notag \\
&=
1+\left(\sum_{k\ge0}(yt)^k\right)
\left(\sum_{\ell\ge1}t^\ell x^{\ell-1}
y^{\chi(\ell\ \text{is odd})}\right) \notag \\
&=
1+\frac{1}{1-yt}
\sum_{\ell\ge1}t^\ell x^{\ell-1}
y^{\chi(\ell\ \text{is odd})}.
\end{align}
Splitting according to the parity of
\(\ell\), the last sum of \eqref{F-T_8} is equal to
\begin{align}
    \sum_{\ell\ge1}t^\ell x^{\ell-1}
y^{\chi(\ell\ \text{is odd})}
&=
\sum_{m\ge0}t^{2m+1}x^{2m}y
+
\sum_{m\ge1}t^{2m}x^{2m-1}
=\frac{yt+xt^2}{1-x^2t^2}.
\end{align}
Combining this and \eqref{F-T_8} yields \eqref{equ:T_8}.
\end{proof}

\begin{theorem}
    Let $T_8=\S_n(213,231,312)$. We have
\begin{equation}\label{equ:P-T_8}
    P_{T_8}(t;x,y)=1+\frac{yt+xt^2}{(1-yt)(1-x^2t^2)}.
\end{equation}
\end{theorem}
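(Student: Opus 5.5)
The plan is to compare statistics termwise on the explicit family \eqref{form-sigma_nk}. I will show that for every $0\le k\le n-1$,
\[
\ides(\mathring{\sigma}_{n,k})=\des(\mathring{\sigma}_{n,k})=n-k-1
\quad\text{and}\quad
\pix(\mathring{\sigma}_{n,k})=k+\chi(n-k\ \text{is odd}),
\]
which agree with \eqref{des-T_8} and \eqref{fix-T_8}. Once these hold, the sum $\sum_{\sigma\in\S_n(T_8)}x^{\ides(\sigma)}y^{\pix(\sigma)}$ equals the right-hand side of \eqref{rec-T_8} for each $n\ge1$. The same summation as in the proof of \eqref{equ:T_8} then gives \eqref{equ:P-T_8}: substitute $\ell=n-k$, factor out $\sum_{k\ge0}(yt)^k$, and split $\ell$ by parity.

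For $\ides$, I check each value $v\in[n-1]$ by asking whether $v+1$ lies to the left of $v$.
\begin{itemize}
\item For $1\le v\le k-1$, both $v$ and $v+1$ lie in the increasing prefix in natural order, so no inverse descent occurs.
\item For $v=k$ (when $k\ge1$), the letter $k$ is in the prefix and $k+1$ is the last letter, so $k$ is to the left and there is no inverse descent.
\item For $k+1\le v\le n-1$, both letters lie in the decreasing segment $n(n-1)\cdots(k+1)$, so $v+1$ precedes $v$. This gives exactly $n-k-1$ inverse descents.
\end{itemize}

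For $\pix$, I exhibit the pixed factorization directly and rely on its uniqueness. Put $\ell=n-k\ge1$.
\begin{itemize}
\item If $\ell$ is even, take $\iota=12\cdots k$ and $\delta=n(n-1)\cdots(k+1)$. Here $\delta$ is a decreasing word of even length, hence a desarrangement, so $\pix=k$.
\item If $\ell$ is odd and $\ell\ge3$, take $\iota=12\cdots k\,n$, which is increasing. Then $\delta=(n-1)\cdots(k+1)$ is decreasing of even length $\ell-1$, so $\pix=k+1$.
\item If $\ell=1$, then $\sigma$ is the identity and $\pix=n=k+1$.
\end{itemize}
In all cases $\pix=k+\chi(\ell\text{ odd})$. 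The same parity conventions appear in the proofs for $T_4$ and $T_5$.

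The only delicate point is the uniqueness claim behind the pixed factorization, that is, that no other increasing prefix leaves a desarrangement. This is part of Foata--Han's result as recalled in the introduction, so it can simply be invoked. Everything else is routine, and the main check is the boundary cases $k=0$ and $\ell=1$.
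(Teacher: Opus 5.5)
Your proposal is correct and follows the paper's proof essentially step for step. Like the paper, you read off $\ides(\mathring{\sigma}_{n,k})=n-k-1$ and $\pix(\mathring{\sigma}_{n,k})=k+\chi(n-k\text{ is odd})$ from the explicit form, match the refined sum to the right-hand side of \eqref{rec-T_8}, and reuse the $F_{T_8}$ summation. Your treatment of $v=k$ as \emph{not} an inverse descent is stated more accurately than the paper's text, which mistakenly calls $k$ an inverse descent, although its final count $n-k-1$ is right.
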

\begin{proof}
Recall that \(\operatorname{ides}(\sigma)\) counts the values
\(v\in[n-1]\) such that \(v+1\) appears before \(v\) in \(\sigma\). By \eqref{form-sigma_nk}, we see that each $v$ is an inverse descent of $\mathring{\sigma}_{n,k}$ for \(k+1\le v\le n-1\). Since the final segment is
decreasing, while the initial segment \(12\cdots k\) is increasing and \(k\)
appears before \(k+1\). Hence, $k$ is also an inverse descent of $\mathring{\sigma}_{n,k}$. 
\begin{equation}\label{ides-T_8}
    \ides(\mathring{\sigma}_{n,k})=n-k-1.
\end{equation}

It remains to determine the number of pixed points. For \(\mathring{\sigma}_{n,k}\), the initial increasing
part is $12\cdots k,$ and the remaining suffix is the decreasing word
$n(n-1)\cdots(k+1),$ of length \(n-k\). We distinguish two cases according to the parity of \(n-k\). If \(n-k\) is
even, then the decreasing suffix \(n(n-1)\cdots(k+1)\) is a desarrangement.
Thus the pixed factorization is
\[
\mathring{\sigma}_{n,k}
=
\underbrace{12\cdots k}_{\iota}
\,
\underbrace{n(n-1)\cdots(k+1)}_{\delta},
\]
and consequently $\pix(\mathring{\sigma}_{n,k})=k.$

If \(n-k\) is odd, then the decreasing suffix
\(n(n-1)\cdots(k+1)\) is not a desarrangement. In this case, the first
letter \(n\) of the final segment must be included in the increasing prefix.
The remaining suffix
$(n-1)(n-2)\cdots(k+1)$
has even length and is therefore a desarrangement. Hence the pixed
factorization is
\[
\mathring{\sigma}_{n,k}
=
\underbrace{12\cdots k\,n}_{\iota}
\,
\underbrace{(n-1)(n-2)\cdots(k+1)}_{\delta},
\]
and so
$\pix(\mathring{\sigma}_{n,k})=k+1$. Thus, in all cases, we have
\begin{equation}\label{pix-T_8}
    \pix(\mathring{\sigma}_{n,k})
=
k+\chi(n-k\ \text{is odd}).
\end{equation}
Combining \eqref{ides-T_8} and \eqref{pix-T_8}, 
we obtain, for fixed \(n\ge1\),
\begin{equation}\label{rec-P-T_8}
    \sum_{\sigma\in\mathfrak{S}_n(T_8)}
x^{\operatorname{ides}(\sigma)}
y^{\operatorname{pix}(\sigma)}
=
\sum_{k=0}^{n-1}
x^{n-k-1}
y^{k+\chi(n-k\ \text{is odd})}.
\end{equation}
Since the right-hand side of \eqref{rec-P-T_8} is identical to that of \eqref{rec-T_8}, the same summation argument as above gives $F_{T_8}(t;x,y)=P_{T_8}(t;x,y)$.
Hence, by \eqref{equ:T_8}, we obtain the desired formula.  
\end{proof}

\subsubsection{The class $\S_n(123,312,321)$}
Let \(T_9=\{123,312,321\}\). By the Erdős--Szekeres theorem~\cite[p.160]{Coh78},
\(\mathfrak S_n(123,321)=\varnothing\) for \(n\ge5\), and therefore
$\mathfrak S_n(T_9)=\varnothing \text{ for } n\ge5.$
For \(1\le n\le4\), direct enumeration yields
$\mathfrak S_1(T_9)=\{1\}$,
$\mathfrak S_2(T_9)=\{12,21\},$
$\mathfrak S_3(T_9)=\{132,213,231\}$, and
$\mathfrak S_4(T_9)=\{2143\}.$
Consequently, we have
\[
F_{T_9}(t;x,y)=P_{T_9}(t;p,q)
=1+yt+(y^2+x)t^2+x(2y+1)t^3+x^2t^4 .
\]

\section{Concluding remarks}

\subsection*{Acknowledgement}
We thank Zhicong Lin for helpful discussions related to this work. The second author was supported by the China Scholarship Council (No. 202206220034).
\printbibliography
\end{document}